\documentclass{amsart}
\usepackage{epsf,graphicx,amsmath,verbatim,epsfig,caption}
 \numberwithin{equation}{section}
\numberwithin{figure}{section}
\numberwithin{table}{section}
\usepackage{multicol}
\usepackage{booktabs}

\usepackage{chngpage}
\usepackage{enumitem}
\usepackage{float}
\usepackage{lipsum}

\textheight=8.7 true in
  \setcounter{page}{1}
   \topmargin 30pt

\newtheorem{theorem}{\sc Theorem}[section]
\newcommand{\RR}{{\rm I\kern -1.6pt{\rm R}}}

\makeatletter
\g@addto@macro{\endabstract}{\@setabstract}
\newcommand{\authorfootnotes}{\renewcommand\thefootnote{\@fnsymbol\c@footnote}}
\makeatother

\begin{document}
\begin{center}
\LARGE
\title{USING PEANO CURVES TO CONSTRUCT LAPLACIANS ON FRACTALS}
\maketitle

\normalsize
\authorfootnotes

DENALI MOLITOR \footnote{Research supported by the National Science Foundations through the Research Experiences for Undergraduates Program at Cornell University, grant DMS-1156350}\textsuperscript{1}, NADIA OTT \footnote{Research supported by the National Science Foundations through the Research Experiences for Undergraduates Program at Cornell University, grant DMS-1156350}\textsuperscript{2}, and ROBERT STRICHARTZ \footnote{Research supported in part by the National Science Foundation, grant DMS-1162045}\textsuperscript{3} \par \bigskip

\textsuperscript{1}\emph{Department of Mathematics, Colorado College} \par
\emph{902 N Cascade Ave, Colorado Springs, CO, 80903} \par
\emph{\email{Denali.Molitor@ColoradoCollege.edu}} \par \bigskip

\textsuperscript{2}\emph{Department of Mathematics, San Diego State University} \par
\emph{5500 Campanile Dr., San Diego, CA, 92182} \par 
\emph{nadiaott@yahoo.com}\par \bigskip

\textsuperscript{3}\emph{Department of Mathematics, Cornell University} \par
\emph{563 Malott Hall, Ithaca, NY, 14850} \par
 \emph{str@math.cornell.edu} \par

\end{center}


%

\begin{abstract}

We describe a new method to construct Laplacians on fractals using a Peano curve from the circle onto the fractal, extending an idea that has been used in the case of certain Julia sets. The Peano curve allows us to visualize eigenfunctions of the Laplacian by graphing the pullback to the circle. We study in detail three fractals: the pentagasket, the octagasket and the magic carpet. We also use the method for two nonfractal self-similar sets, the torus and the equilateral triangle, obtaining appealing new visualizations of eigenfunctions on the triangle. In contrast to the many familiar pictures of approximations to standard Peano curves, that do no show self-intersections, our descriptions of approximations to the Peano curves have self-intersections that play a vital role in constructing graph approximations to the fractal with explicit graph Laplacians that give the fractal Laplacian in the limit. 
\smallskip

\noindent $\textbf{\emph{Keywords:}}$ Peano curve, self-similiar fractals, Laplacian, pentagasket, octagasket, magic carpet
\smallskip

\noindent  2010 AMS Subject classification: Primary 28A80

\end{abstract}


\section{Introduction}
In classical analysis, Peano curves are usually thought of as curiosities. It is exciting and counterintuitive that they exist, but they are not useful for solving problems. The purpose of this paper is to show that in fractal analysis, in contrast, Peano curves may play an important role in constructing Laplacians and studying their properties. In fact, this approach has already been used to construct Laplacians on Julia sets of complex polynomials \cite{julia},\cite{famofjulia},\cite{JuliaSetsIII}.

If $X$ is a compact topological space, we will use the term Peano curve for any continuous mapping $\gamma$ from the circle (parameterized by $t\in [0,1]$ with $0\equiv 1$) onto $X$. It is well known that $\gamma$ cannot be one-to-one (except in the trivial case in which $X$ is homeomorphic to a circle), so there must be values in $[0,1]$ that are mapped to the same point in $X$, say $\gamma (t_1)=\gamma (t_2)$. We will say that such $t_1$ and $t_2$ are identified and write $t_1\equiv t_2$. If we consider all possible identifications, then we obtain a model of $X$ as a circle with identifications, exactly the point of view adopted in \cite{julia},\cite{famofjulia},\cite{JuliaSetsIII}.. Typically the number of identifications will be uncountable. Our goal is to obtain a countable sequence of identifications that is appropriately dense in the set of all identifications, so that if we take an initial segment of identifications, we will obtain a useful approximation for $X$.

In the case of Julia sets, the Peano curve is defined by means of applying the Riemann mapping theorem to the component of infinity of the complement of the Julia set. Despite this rather abstract construction, it turned out that it was possible to find a useful sequence of identified points. In this paper, Peano curves will be given as a limit of a sequence of curves defined by a self-similar type iteration scheme, and the self-intersections of the approximating curves will give us the sequence of identifications of the limiting Peano curve. It is interesting to note that many of the examples of Peano curves to planar domains that are found in textbooks are also constructed as limits of simpler curves obtained by iterating some self-similar scheme, but the approximating curves have no self-intersections. From our point of view this is a rather silly choice.

Given a finite set of identifications on the circle, we have a natural graph structure where the identified points are the vertices, and the edges join consecutive points around the circle. If we assign positive weights $\mu(t_j)$ to the points, thought of as a discrete measure on the set of vertices, and non-negative weights $c(t_j,t_{j+1})$ to the edges, thought of as conductances on an electrical network associated to the graph, then we may define a graph Laplacian

\begin{equation}
-\Delta u(x)=\frac{1}{\mu (x)}\sum_{x \sim y} c(x,y)(u(x)-u(y)),
\end{equation}

where the sum is taken over all $y$ neighboring $x$ \cite{str}. For simplicity we may take $c(x,y)=0$ if $x$ and $y$ are not connected by an edge. Notice that self-edges are possible if $x$ and $y$ are identified neighboring points, but they do not contribute to the Laplacian. We want the discrete measures on the approximating graphs to converge to some natural measure on $X$. In all the examples considered here, we will use the simple choice of taking $\mu(x)=\sum\frac{1}{2}\left(t_{j+1}-t_{j-1}\right)$ where the sum is taken over all points in the set of identified points denoted by $x$. For the conductances, the simplest choice is $c\left(t_j,t_{j+1}\right)=\frac{1}{t_{j+1}-t_j}$, so the length of the interval $[t_j,t_{j+1}]$ is taken to be the resistance in the electrical network. This is not always the optimal choice, however, as is apparent in the previous work on Julia sets. In particular we will make a more sophisticated choice for one of our examples, the pentagasket. In order to obtain a Laplacian on $X$ in the limit it will be necessary to renormalize the sequence of Laplacians on the approximating graphs, and this becomes a highly nontrivial problem.

In this paper we will look at five examples of self-similar sets $X$ with carefully chosen Peano curves. Only three of them are fractal. The nonfractal sets are the equilateral triangle $T$ with Neumann boundary conditions, and the square torus $T_o$. These may be regarded as "controls," since we know exactly what the eigenvalues and eigenfunctions of the Laplacian are. We can then see how well the Peano curves reproduce the known results. It would of course be silly to claim that this is an optimal way to develop properties of the usual planar Laplacian; however we will show that it reveals some insights into the eigenfunctions to view them as functions on the circle pulled back via the Peano curves. 

The first fractal example we consider is the pentagasket, PG (Figure 1.1). It is a highly symmetric example of Kigami's class of postcritically finite (PCF) self-similar sets \cite{ki} \cite{str}. It has a fully symmetric self-similar Laplacian, whose properties were studied in detail in \cite{Spec}. Since it does not satisfy spectral decimation, many of these properties have only been observed experimentally. We construct a Peano curve that yields a slightly different sequence of graph approximations than was considered in \cite{Spec}. By following the outline from the work on Julia sets, we are able to find the correct renormalization constant and give an independent construction of the Laplacian, together with numerical results fully consistent with those in \cite{Spec}. In this sense, the case of PG may also be considered a "control."

\begin{figure}
\includegraphics[scale=0.5, trim=0mm 0mm 0mm 0mm,clip]{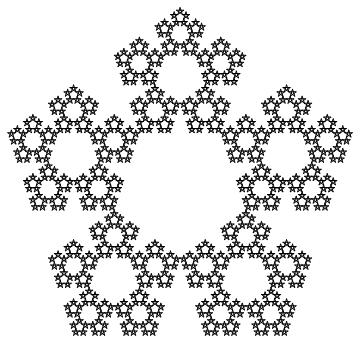}
\caption{Pentagasket}
\end{figure}

The second fractal example is the octagasket, OG (Figure 1.2). This fractal is not PCF, and in fact there is no proof yet of the existence of a symmetric self-similar Laplacian on OG. Nevertheless, experimental evidence of the existence of a Laplacian and properties of the spectrum using the method of "outer approximation" was presented in \cite{BHS}. Our Peano curve approach gives independent experimental evidence for the existence of the Laplacian with data that is consistent with \cite{BHS}. What is more, we are able to give concrete conjectures concerning the structure of the spectrum; for example, we find precise locations in the spectrum for spectral gaps that were noticed in \cite{BHS}. The existence of spectral gaps is still quite a mystery. It is possible to prove existence for PCF fractals that enjoy spectral decimation, but the proof is just technical and yields no ``ideological" explanation for them. The significance of spectral gaps is pointed out forcefully in \cite{FourierSeries}.

\begin{figure}[h!]
\includegraphics[scale=0.5, trim=0mm 0mm 0mm 0mm,clip]{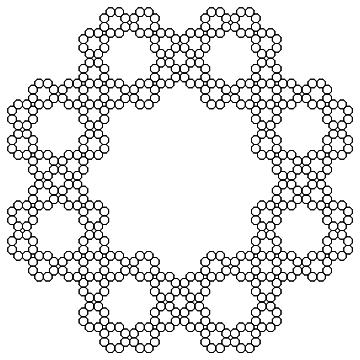}
\caption{Octagasket}
\end{figure}

The third fractal example we consider is the Magic carpet MC, recently introduced in \cite{kform}. This fractal is obtained by modifying the construction of the Sierpinski carpet SC (Figure 1.3) to immediately sew up all cuts that are made, as illustrated in Figure 1.4. Thus MC is a limit of closed surfaces, and geometrically these surfaces are flat everywhere with the exception of a finite number of point singularities carrying negative curvature. Again, there is no proof of the existence of a symmetric self-similar Laplacian on MC, but the results in this paper together with \cite{kform} give strong experimental evidence. The evidence suggests that analysis on MC should be similar to Euclidean spaces of dimension exceeding two, with points having zero capacity.

\begin{figure}
\includegraphics[scale=0.5, trim=0mm 0mm 0mm 0mm,clip]{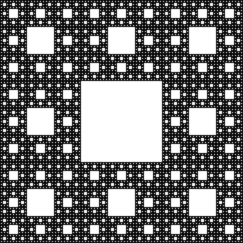}
\caption{Sierpinski Carpet}
\end{figure}

The paper is organized as follows. In section 2 we give descriptions of all the Peano curves, as limits of piecewise linear maps obtained recursively using specified substitution rules. We are not claiming that these are the unique curves, or even that they are optimal in any sense. 

In section 3 we discuss the example of PG. We describe the method of creating graph approximations and associated graph Laplacians from the Peano curve, and relate this to the Laplacian constructed by Kigami \cite{ki}(see also \cite{FourierSeries} and \cite{Spec}). We use the same method for the other examples, the octagasket (section 4), the magic carpet (section 5), and the torus and triangle (section 6). The existence of the Laplacian in the limit for OG and MC has not yet been established, and we do no see any way to use the Peano curve construction to resolve this problem. For the torus and the triangle the Laplacian and its spectrum are well known. The graphs of the eigenfunctions pulled back to the circle via the Peano curve for the triangle are new and appealing.

\begin{figure}
\begin{minipage}[h!]{0.40\linewidth}
\includegraphics[scale=.25,trim = 0mm 0mm 0mm 0mm, clip]{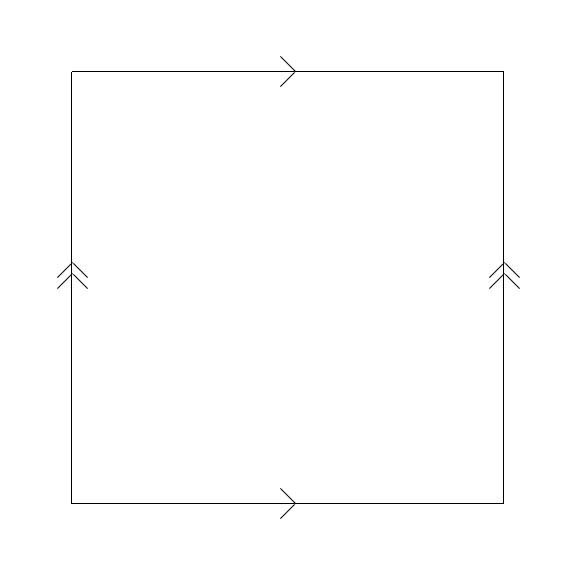}
\end{minipage}
\begin{minipage}[h!]{0.40\linewidth}
\includegraphics[scale=.25,trim = 0mm 0mm 0mm 0mm, clip]{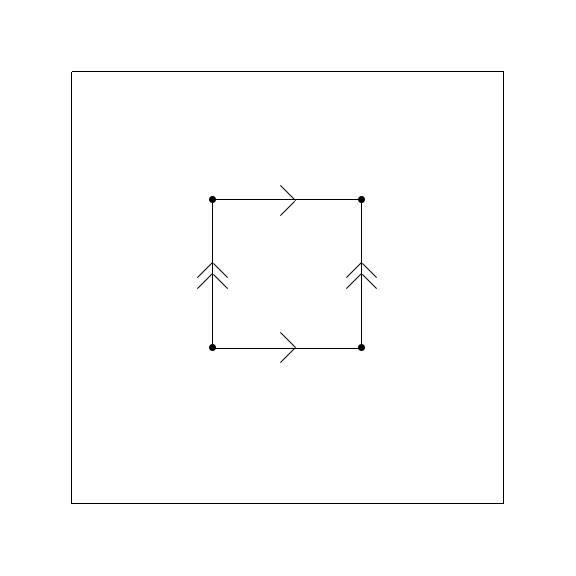}
\end{minipage}
\caption*{FIGURE 1.4. To transform SC into MC, "sew up" torus-type edge identifications, as shown here on level 0 and level 1. Point singularities of infinite negative curvature occur at the identified interior points markes at level 1}
\end{figure}

We have not attempted to obtain higher order accuracy in approximating eigenvalues by using more sophisticated numerical methods, such as the finite element method. It should not be difficult to adopt such methods to our examples, if so desired.

We include many numerical results, such as tables of eigenvalues and graphs of eigenfunctions. The reader should see the website \emph{www.math.cornell.edu/ $\sim$ dmolitor} for more data.

This paper should be regarded as a "proof of concept" paper, showing by example that the method of Peano curves can be an effective tool in studying analysis on fractals. We hope it will prove useful for other fractals.

\section{Constructing the Peano Curves}
All the Peano curves we consider are limits of piecewise linear maps $\gamma_m$, where the passage from $\gamma_m$ to $\gamma_{m+1}$ is given by a set of substitution rules for replacing each linear segment of $\gamma_m$ by a union of consecutive segments of $\gamma_{m+1}$ with the same endpoints. To make this clear we start by describing a Peano curve whose image is the Sierpinski gasket (SG, Figure 2.1).
\begin{figure}
\includegraphics[scale=.5,trim = 15mm 45mm 75mm 65mm, clip]{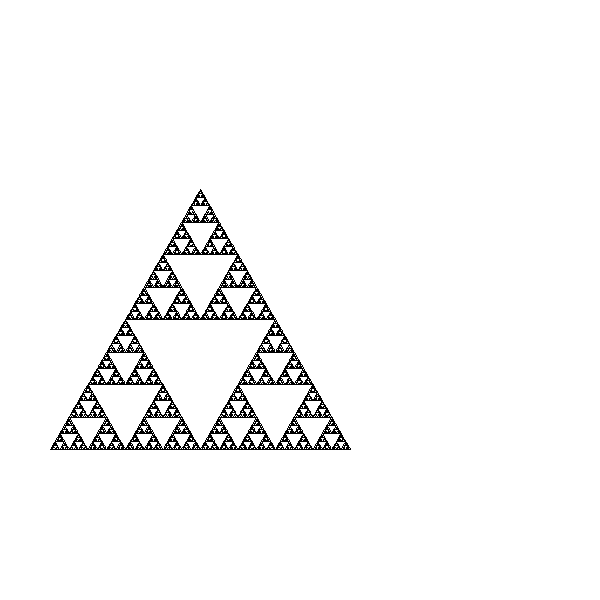}
\caption{Sierpinski Gasket.}
\end{figure}
 We will not discuss this example in detail below because it gives rise to exactly the same approximations to the Laplacian on SG as initially constructed by Kigami \cite{ki} \cite{str}. The first approximation $\gamma_0$ simply traces out an equilateral triangle at constant speed. The substitution rules are shown in Figure 2.2. Each linear segment traces out an interval in one of the three orientations of the sides of the triangle. Each segment of $\gamma_m$ is parameterized by an interval $\left[\frac{k}{3^{m+1}},\frac{k+1}{3^{m+1}}\right]$ and is shown as a dotted line in the figure. It's replacement, three intervals parameterized by $\left[\frac{3k}{3^{m+2}},\frac{3k+1}{3^{m+2}}\right],\ \left[\frac{3k+1}{3^{m+2}},\frac{3k+2}{3^{m+2}}\right],\ \left[\frac{3k+2}{3^{m+2}},\frac{3k+3}{3^{m+2}}\right]$ traces out the solid lines, with the same direction as the dotted line.
\begin{figure}
\begin{minipage}[h!]{0.28\linewidth}
\includegraphics[scale=.28,trim = 0mm 0mm 0mm 0mm, clip]{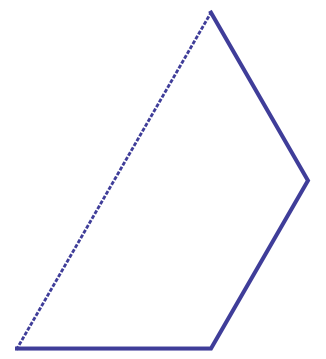}\\a.
\end{minipage}
\begin{minipage}[h!]{0.28\linewidth}
\includegraphics[scale=.28,trim = 0mm 0mm 0mm 0mm, clip]{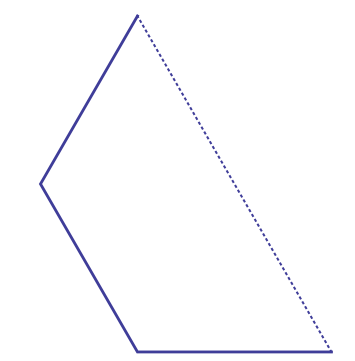}\\b.
\end{minipage}
\begin{minipage}[h!]{0.28\linewidth}
\includegraphics[scale=.28,trim = 0mm 0mm 0mm 0mm, clip]{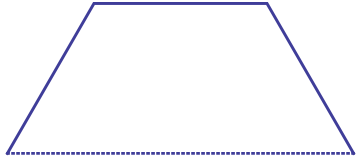}\\c.
\end{minipage}
\caption{Shows the substitution rule for the three types of line segments encountered in SG. The dotted line represents the line from the previous level that is to be replaced. The replacement curve has the same beginning and end point as the line in the previous level.}
\end{figure}
Figure 2.3 shows the image of $\gamma_0, \gamma_1$ and $\gamma_2$ with arrows to show the direction and vertices labeled $k$ to indicate $\gamma_m\left(\frac{k}{3^{m+1}}\right)$.
\begin{figure}
\begin{minipage}[h!]{0.32\linewidth}
\includegraphics[scale=.25,trim = 0mm 0mm 0mm 0mm, clip]{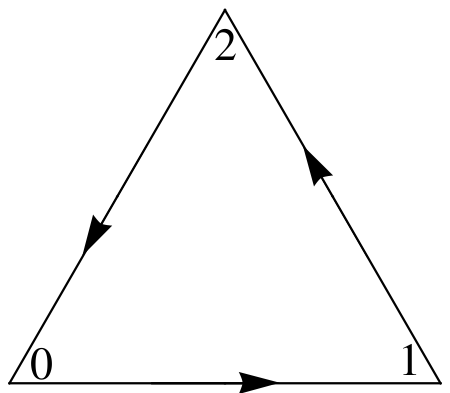}\\a.
\end{minipage}
\begin{minipage}[h!]{0.32\linewidth}
\includegraphics[scale=.25,trim = 0mm 0mm 0mm 0mm, clip]{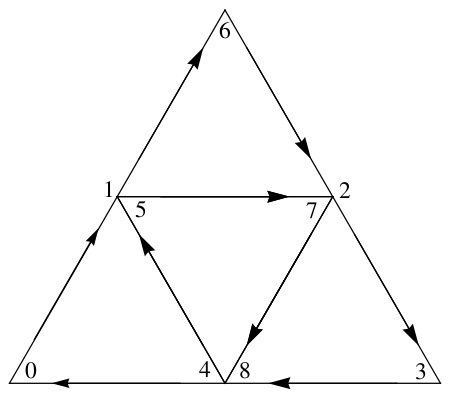}\\b.
\end{minipage}
\begin{minipage}[h!]{0.32\linewidth}
\includegraphics[scale=.25,trim = 0mm 0mm 0mm 0mm, clip]{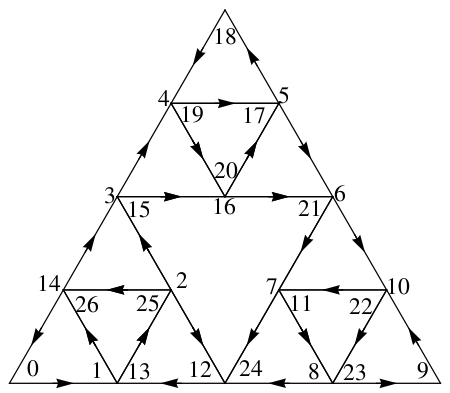}\\c.
\end{minipage}
\caption*{FIGURE 2.3. The image of $\gamma_m$ for $m=0,1,2$ in units of $(\frac{1}{3})^{m+1}$}
\end{figure}
Note that $\gamma_m\left(\frac{k}{3^{m+1}}\right)=\gamma_{m+1}\left(\frac{3k}{3^{m+2}}\right)$ etc., so the value of the limiting curve $\gamma$ at a value $\frac{k}{3^{m+1}}$ is the same as for all $\gamma_{m'}$ with $m'\geq m$. For a generic value of $t$, however, it is not so obvious what the point $\gamma (t)$ on SG is exactly. In Figure 2.4,
\begin{figure}
\begin{minipage}[h!]{0.49\linewidth}
\includegraphics[scale=.6,trim = 0mm 0mm 0mm 0mm, clip]{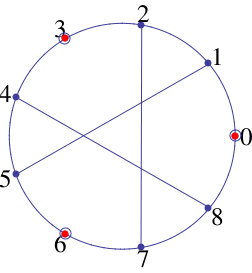}\\a.
\end{minipage}
\begin{minipage}[h!]{0.49\linewidth}
\includegraphics[scale=.45,trim = 0mm 0mm 0mm 0mm, clip]{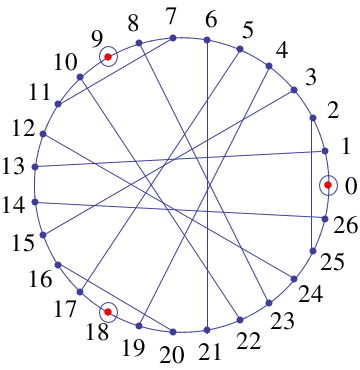}\\b.
\end{minipage}
\caption*{FIGURE 2.4. Shows the identifications on the Peano curves corresponding to the first and second level graph approximations of the Sierpinski Gasket. Figure 2.4.a corresponds to the first level graph approximation  in units of $\frac{1}{9}$ and figure 2.4b corresponds to the second in units of $\frac{1}{27}$. Note that for each level $m$, the points $0, 3^m$ and $2\cdot 3^m$ remain unidentified. Also, identifications from previous levels carry on as permanent fixtures in each of the higher levels.}
\end{figure}
 we show the identifications on the circle for the curves $\gamma_1$ and $\gamma_2$. These identifications persist for the limit curve $\gamma$ and all identifications arising from $\gamma$ are limits of those produced by $\gamma_m$ as $m\to\infty$, although it is not at all obvious what these are.

To indicate the complexity of this simple example, we ask the question: what is the image of $\gamma \left(\left[0,\frac{1}{3}\right]\right)$, the first third of the circle? Figure 2.5
\begin{figure}
\includegraphics[scale=.5,trim = 15mm 50mm 75mm 60mm, clip]{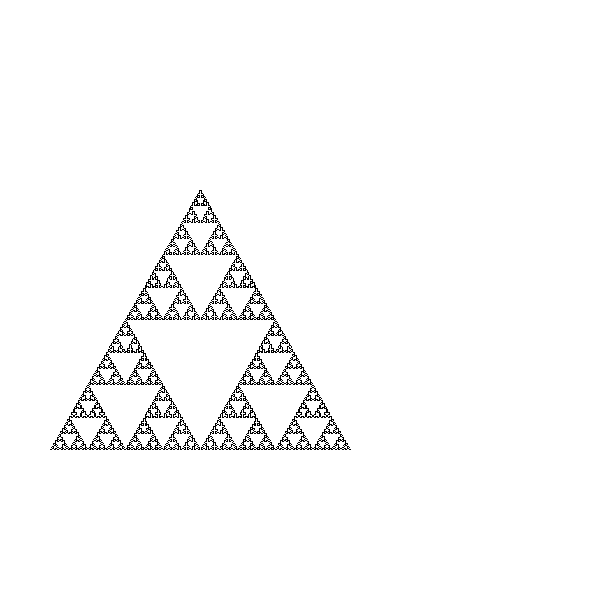}
\caption*{FIGURE 2.5. The image of $\gamma_5$ $[0,\frac{1}{3}]$.}
\end{figure}
 shows $\gamma_7 \left(\left[0,\frac{1}{3}\right]\right)$, which suggests $\left(\left[0,\frac{1}{3}\right]\right)=SG$. It is in fact easy to prove this, since $\gamma_m\left(\left[0,\frac{1}{3}\right]\right)$ satisfies a self-similar identity which uniquely characterizes SG. Thus the restriction of $\gamma$ to $\left(\left[0,\frac{1}{3}\right]\right)\ \left(\text{or }\left[\frac{1}{3},\frac{2}{3}\right] \text{ or }\left[\frac{2}{3},1\right]\right)$ is another Peano curve describing SG. Why did we not use this Peano curve to start with?
There are two reasons: 
1) the Peano curve destroys the dihedral-3 symmetry,
2) the approximations have no self-intersections.
(If we were interested in creating a Peano curve from $\mathbb{R}$ to an infinite blow up of SG we would use these curves as the building blocks.) But this observation has the simple consequence that the original Peano curve $\gamma$ must pass through each point (with the exception of the three vertices of the original triangle) at least three times, once for each third of the circle. Of course the smaller curves must have self-intersections as well, so there must certainly be some equivalence classes of identified points with four elements. Nevertheless, the approximating curves only produce equivalence classes with two elements. There are many interesting questions about the size of equivalence classes that we are not able to answer; are there infinite equivalence classes? If not, is there a maximal size? What is the size of a "typical" equivalence class? What is the complete equivalence class of the junction points $\frac{k}{3^{m+1}}$? Despite the vexing nature of these unanswered questions, the identifications from the approximating curves $\gamma_m$ give a perfect picture of the Laplacian on SG. In this case, what you do not know does not hurt you.

The next Peano curve we describe maps to the Pentagasket (PG). The first level approximation $\gamma_1$ traces out a five-sided star. We think of this as consisting of five line segments, but because of the intersections, each segment consists of three pieces. We choose to traverse the pieces at different speeds, faster on the longer end pieces and slower on the short middle piece, so that the arrival times will be consistent from level to level. In general, the identified points for $\gamma_m$ will be of the form $\frac{k}{5^{m+1}}$, where $k\equiv 1$ or $4\mod 5$. Figure 2.6 shows $\gamma_1$ and $\gamma_2$ $\left(\text{we write }k\text{ for }\frac{k}{5^{m+1}}\right)$.
\begin{figure}
\begin{minipage}[h!]{0.49\linewidth}
\includegraphics[scale=.4,trim = 0mm 0mm 0mm 0mm, clip]{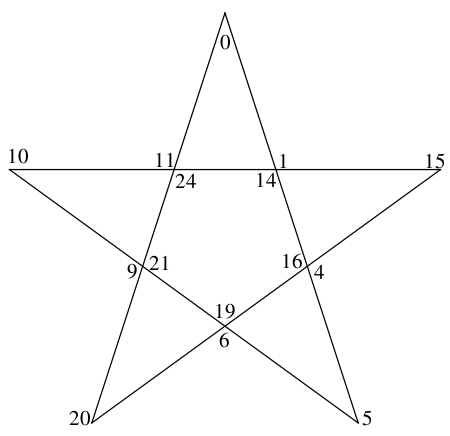}
\end{minipage}
\begin{minipage}[h!]{0.49\linewidth}
\includegraphics[scale=.4,trim = 0mm 0mm 0mm 0mm, clip]{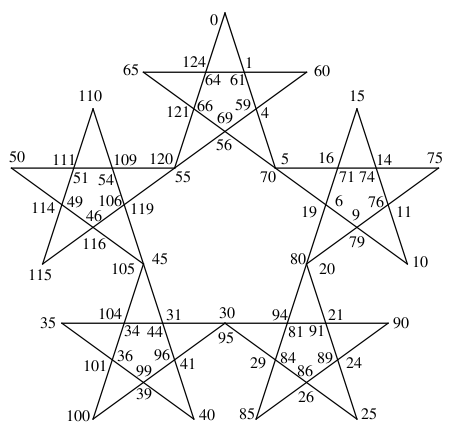}
\end{minipage}
\caption*{FIGURE 2.6. Shows the first and second level graph approximations of PG with the locations corresponding to locations on the Peano curve labeled on each of the vertices. In units of $\frac{1}{25}$ for the first level and $\frac{1}{125}$ for the second level.}
\end{figure}
The substitution rule is illustrated in Figure 2.7 and is the same for all five rotations of the original dotted line.

\begin{figure}
\includegraphics[scale=0.5,trim = 25mm 25mm 25mm 25mm, clip]{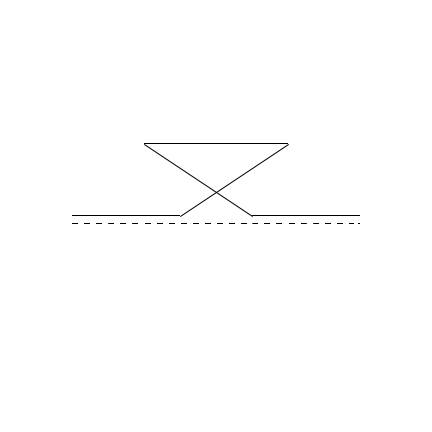}
\caption*{FIGURE 2.7. The substitution rule for one of the five directions.}
\end{figure}

\begin{figure}
\begin{minipage}[h!]{0.49\linewidth}
\includegraphics[scale=.4,trim = 0mm 0mm 0mm 0mm, clip]{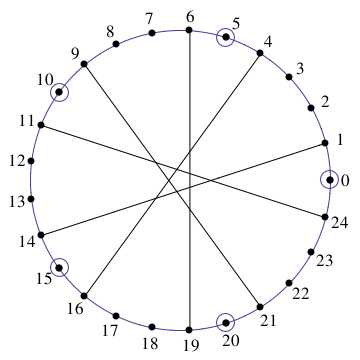}\\a.
\end{minipage}
\begin{minipage}[h!]{0.49\linewidth}
\includegraphics[scale=.4,trim = 0mm 0mm 0mm 0mm, clip]{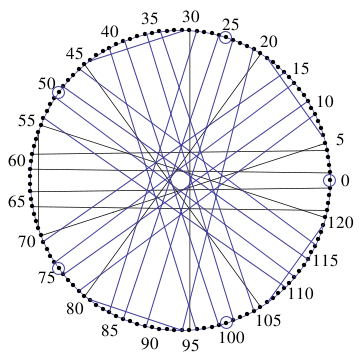}\\b.
\end{minipage}
\caption*{FIGURE 2.8. Shows the identifications on the Peano curve corresponding to the first and second level graph approximations of PG in units of $\frac{1}{25}$ and $\frac{1}{125}$, respectively. Note that the points of the form $5^{m-1}$, where $m$ is the level of the graph approximation remain unidentified throughout all levels of the graph approximations.}
\end{figure}
In Figure 2.8
we show the identifications on the circle arising from $\gamma_1$ and $\gamma_2$. We include the unidentified points of the form $\frac{k}{5^{m+1}}$, with $k\equiv 0\mod 5$ as these are turning points for the curves and we will use these in forming the approximating graphs and associated Laplacians. We see that there are two distinct types of edges in the graph of length $\frac{1}{5^{m+1}}$ and $\frac{3}{5^{m+1}}$ and we will assign different conductances to the two types. All equivalence classes of identified points consist of pairs and identifications persist from level to level. In this case we note that the image of one fifth of the circle is not all of PG, but rather a smaller self-similar set defined by an iterated function system of five similarities with a smaller contraction ration and a different set of fixed points (See Figure 2.9).
\begin{figure}
\includegraphics[scale=.3,trim = 0mm 0mm 0mm 0mm, clip]{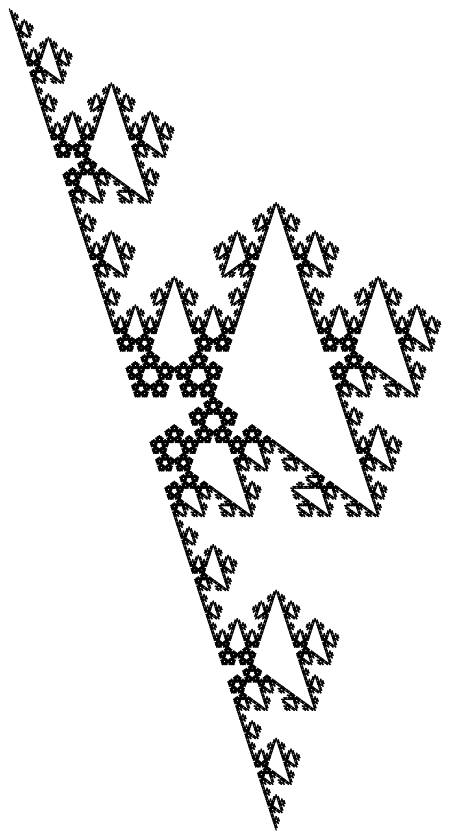}
\caption*{FIGURE 2.9. Shows the image of the first fifth of the Peano curve on PG. Each of the subsequent one-fifth sections of the Peano curve are reflections or rotations of this image.}
\end{figure}

Next we describe the Peano curve for OG. We begin with $\gamma_0$ that traces around an octagon clockwise and then turns around and traces around counterclockwise. The substitution rule is shown in Figure 2.10. 
\begin{figure}
\begin{minipage}[h!]{0.49\linewidth}
\includegraphics[scale=.45,trim = 10mm 15mm 30mm 10mm, clip]{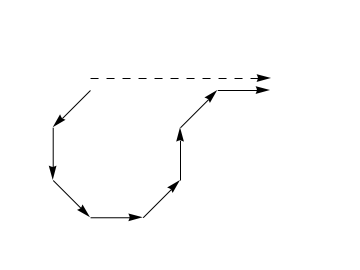}
\end{minipage}
\begin{minipage}[h!]{0.49\linewidth}
\includegraphics[scale=.45,trim = 10mm 15mm 10mm 10mm, clip]{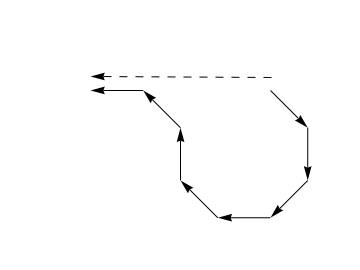}
\end{minipage}
\caption*{FIGURE 2.10. Shows the substitution rule for OG. The dotted arrow shows the line from the previous level of the graph approximation that is to be replaced. Note that the two substitutions shown differ in their direction and are reflections of each other. The eight rotations of these dotted lines corresponding to different possible edges of OG graph approximations use the same substitutions.}
\end{figure}
Note that the direction of the edge is significant. For $\gamma_m$ the identified points are of the form $\frac{k}{2\cdot 8^{m+1}}$. Some equivalence classes have two points, and we call these \emph{outer points}, while others have four points and we call these \emph{inner points}. Figure 2.11
\begin{figure}
\begin{minipage}[h!]{0.49\linewidth}
\includegraphics[scale=.45,trim = 10mm 15mm 30mm 10mm, clip]{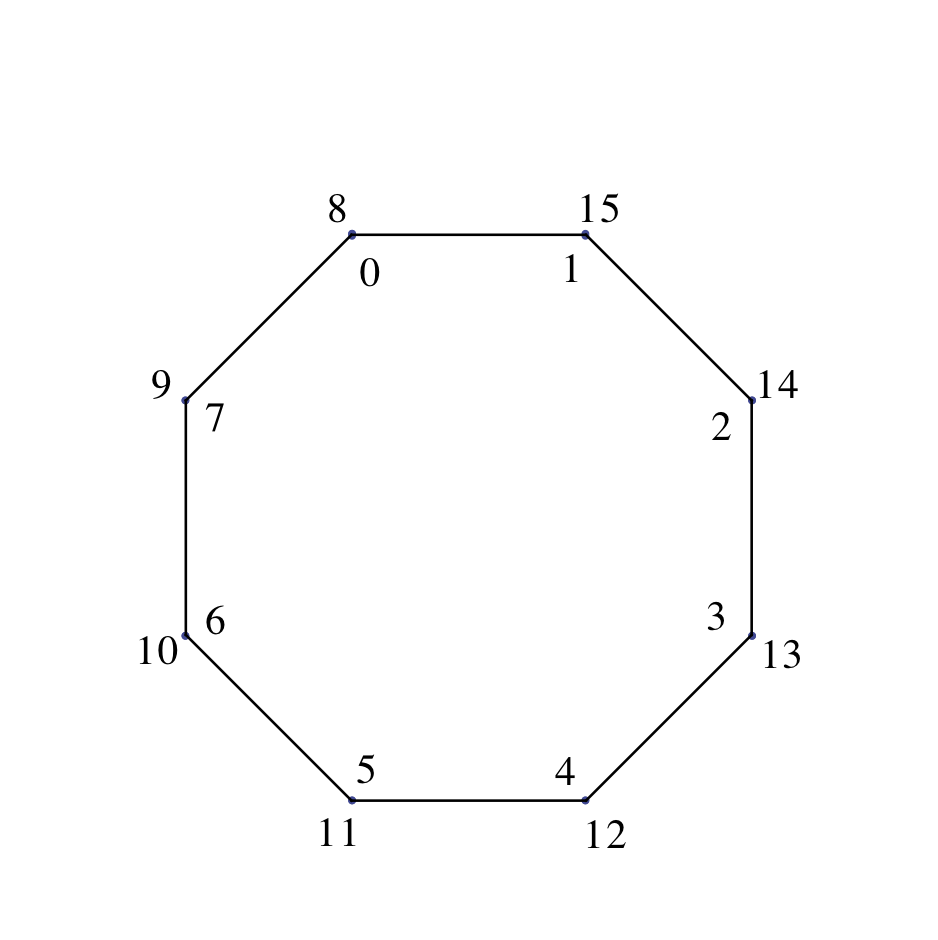}
\end{minipage}
\begin{minipage}[h!]{0.49\linewidth}
\includegraphics[scale=.35,trim = 10mm 15mm 10mm 10mm, clip]{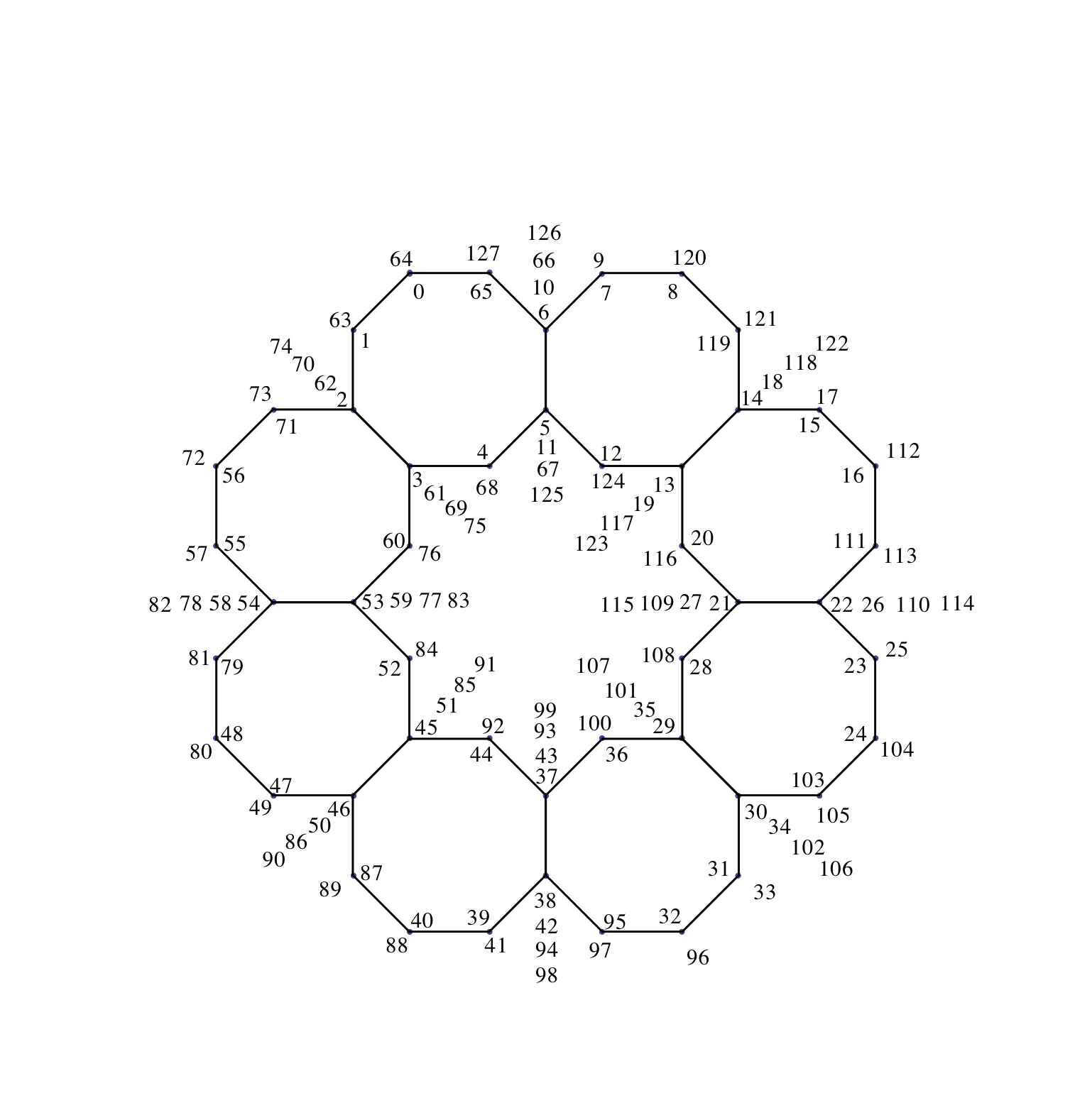}
\end{minipage}
\caption*{FIGURE 2.11. The paths $\gamma_1$ and $\gamma_2$ on OG}
\end{figure}
shows $\gamma_0$ and $\gamma_1$. Figure 2.12
\begin{figure}
\begin{minipage}[h!]{0.49\linewidth}
\includegraphics[scale=.40,trim = 0mm 0mm 0mm 0mm, clip]{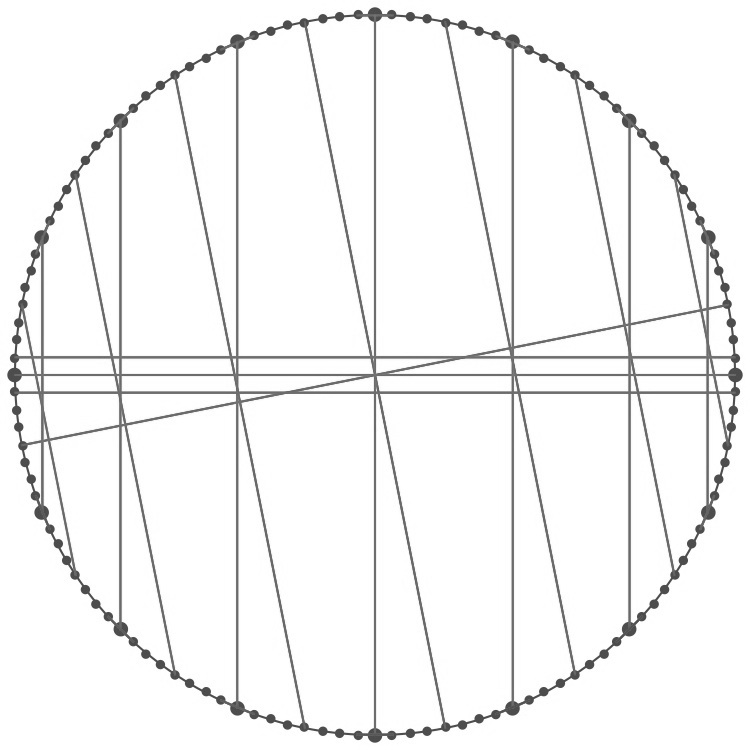}\\a.
\end{minipage}
\begin{minipage}[h!]{0.49\linewidth}
\includegraphics[scale=.40,trim = 0mm 0mm 0mm 0mm, clip]{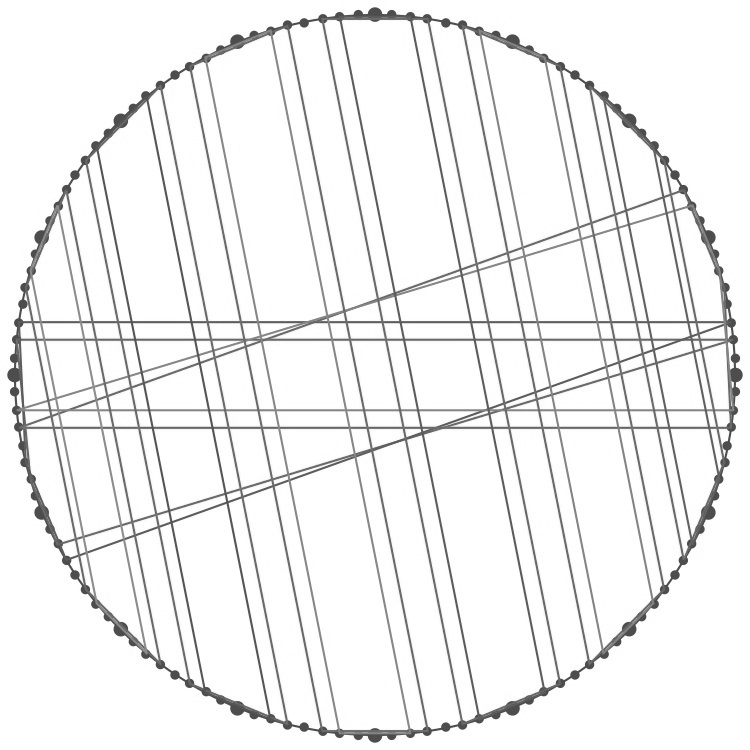}\\b.
\end{minipage}
\caption*{FIGURE 2.12. Shows the identifications of outer points(a) and inner points(b) from $\gamma_2$} 
\end{figure}
shows the identifications of outer and inner points from $\gamma_2$.

We note that the restriction of $\gamma$ to the interval $\left[ 0,\frac{1}{2}\right]$ maps onto OG, but it destroys the symmetry and fails to give the full set of identifications.

Next we describe the Peano for the Magic Carpet (MC). This construction requires a bit of imagination, since MC does not embed in the plane. The curves $\gamma_m$ approximating $\gamma$ may be visualized as mappings to the Sierpinski Carpet (SC) with jump discontinuities, where the jumps connect points of SC that are identified to create MC. We will take $\gamma_m$ to be the union of $2\cdot 8^{m}$ line segments from $\left[\frac{k}{2\cdot 8^{m+1}},\frac{k+1}{2\cdot 8^{m+1}}\right]$ to each of two boundary edges of each $m$-cell in MC.
In Figure 2.13
\begin{figure}
\begin{minipage}[h!]{0.49\linewidth}
\includegraphics[scale=.35,trim = 0mm 0mm 0mm 0mm, clip]{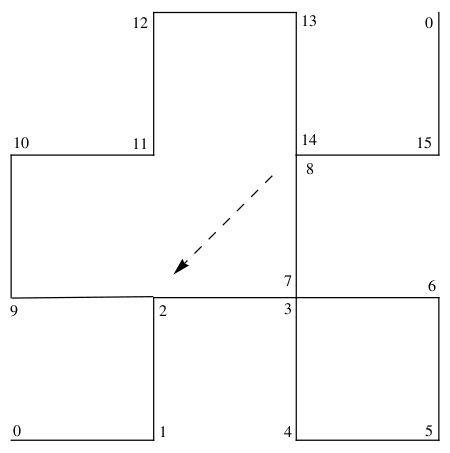}\\a.
\end{minipage}
\begin{minipage}[h!]{0.49\linewidth}
\includegraphics[scale=.35,trim = 0mm 0mm 0mm 0mm, clip]{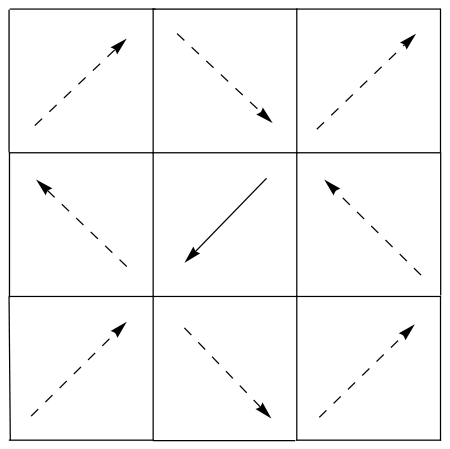}\\b.
\end{minipage}
\caption*{FIGURE 2.13: The path $\gamma_1$ for MC. In a. we show the actual path with the points 2,3,7,8,11 and 14 identified. In b., the dashed arrows show the symbolic description, while the solid arrow illustrates a jump between identified points.}
\end{figure}
we show both the symbolic description of $\gamma_1$ and the actual path, which has a single jump discontinuity at $\frac{1}{2}$. We note that this gives rise to a set of six identified points $\{2,3,7,8,11,14\}$ and the rest identified in pairs $\{0,5\},\{1,12\},\{4,13\},\{6,9\}$ and $\{10,15\}$, mainly because of the MC identifications. The substitution rule in symbolic form is shown in Figure 2.14.
\begin{figure}
\begin{minipage}[h!]{0.49\linewidth}
\includegraphics[scale=.6,trim = 0mm 0mm 0mm 0mm, clip]{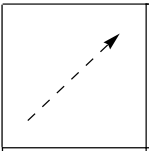}\\a.
\end{minipage}
\begin{minipage}[h!]{0.49\linewidth}
\includegraphics[scale=.35,trim = 0mm 0mm 0mm 0mm, clip]{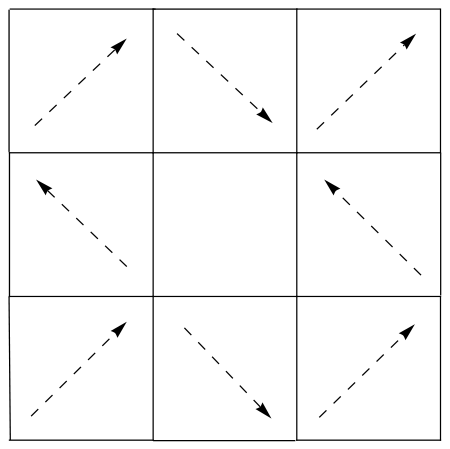}\\b.
\end{minipage}
\caption*{FIGURE 2.14. Shows one of  the symbolic substitution rule for MC.The others are simply rotations of this one.}
\end{figure}
\begin{figure}
\includegraphics[scale=.45,trim = 0mm 0mm 0mm 0mm, clip]{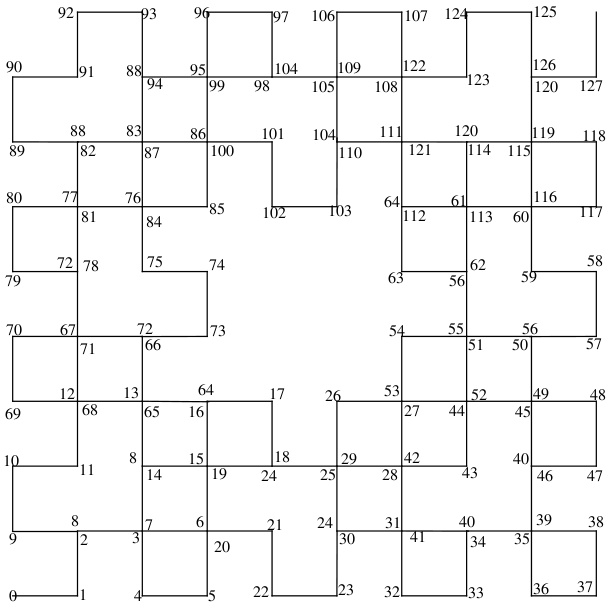}
\caption*{FIGURE 2.15. The path $\gamma_2$ on MC}
\end{figure}
Figure 2.15 shows the path $\gamma_2$. We note that every edge at level $m$ is traversed exactly once by $\gamma_m$.

This Peano curve differs from the others in that the identifications at level $m$ do not persist exactly at level $m+1$. In fact if $\frac{k}{2\cdot 8^{m+1}}\sim\frac{j}{2\cdot8^{m+1}}$ at level $m$ where $k$ is even and $j$ is odd, then $\frac{8k}{2\cdot8^{m+2}}\sim \frac{8j\pm 3}{2\cdot 8^{m+2}}$ at level $m+1$.

A slight modification of the MC construction gives rise to a Peano curve to the square torus T$_0$. Figure 2.16
shows how to modify the substitution rule from Figure 2.14. Figure 2.17 shows the actual path of $\gamma_1$.

\begin{figure}
\begin{minipage}{0.49\linewidth}
\includegraphics[scale=.6,trim = 0mm 0mm 0mm 0mm, clip]{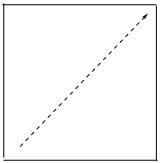}
\end{minipage}
\begin{minipage}{0.49\linewidth}
\includegraphics[scale=.6,trim = 0mm 0mm 0mm 0mm, clip]{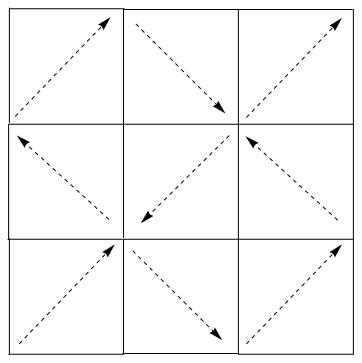}
\end{minipage}
\caption*{FIGURE 2.16. Shows one of the symbolic substitution rules for $T_0$. The others are simply rotations of this one.}
\end{figure}

\begin{figure}
\includegraphics[scale=.65,trim = 0mm 0mm 0mm 0mm, clip]{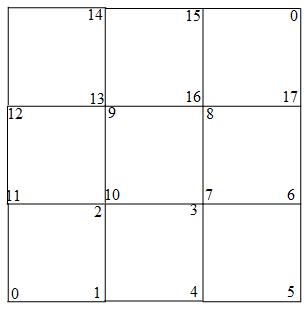}
\caption*{FIGURE 2.17. Path of $\gamma_1$ on $T_0$}
\end{figure}

Now $\gamma_m$ identifies points of the form $\frac{k}{2\cdot 9^{m}}$ in pairs. Again in this example we note that identifications do not persist from level to level. 

The final example gives a Peano curve to the equilateral triangle $T$. It may be thought of as a modification of the first example (SG). The substitution rule is shown in Figure 2.18 (to be compared with Figure 2.2), and the paths $\gamma_0,\gamma_1$ and $\gamma_2$ are shown in Figure 2.19 (to be compared with Figure 2.3). We note that there are now three types of points:

\begin{figure}
\centering
\begin{adjustwidth}{-.0in}{-0in}
\begin{minipage}[h!]{0.49\linewidth}
\includegraphics[scale=.4,trim = 5mm 40mm 5mm 5mm, clip]{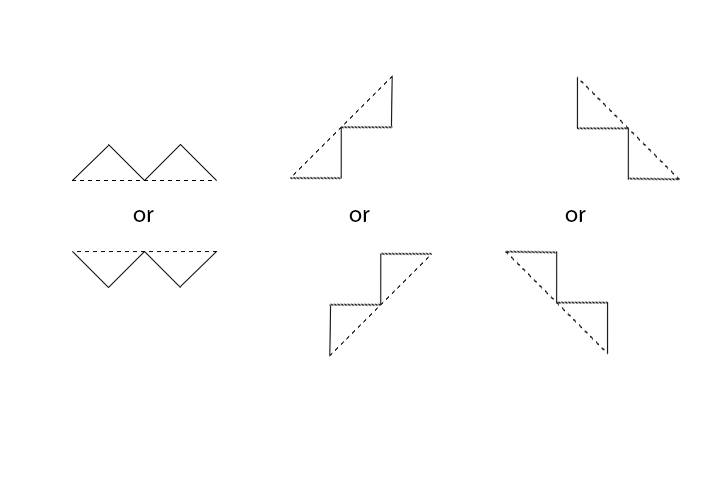}
\end{minipage}
\end{adjustwidth}
\caption*{Figure 2.18. Shows the substitution rule for the triangle}
\end{figure}

\begin{table}

\centering
\begin{adjustwidth}{-1.3in}{-1.0in}
\begin{tabular}{c c c}

\includegraphics[scale=.5,trim = 0mm 55mm 0mm 0mm, clip]{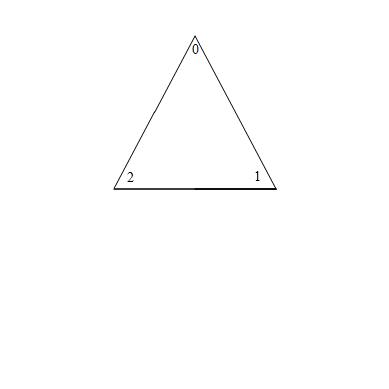}
&
\includegraphics[scale=.45,trim = 0mm 0mm 0mm 0mm, clip]{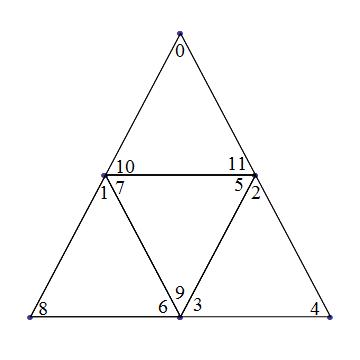}
&
\includegraphics[scale=.45,trim = 0mm 0mm 0mm 0mm, clip]{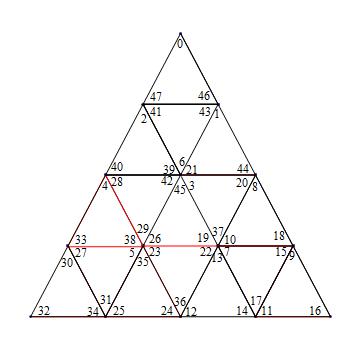}\\

\end{tabular}
\caption*{Figure 2.19. The paths $\gamma_0$, $\gamma_1$, and $\gamma_2$ for the triangle}
\end{adjustwidth}
\end{table}

(i) the three corner points that are not identified;
(ii) the other points along the boundary that are identified in groups of three;
(iii) interior points that are identified in groups of six. 
Similarly, there are two types of edges: (i) boundary edges that are traversed once in the counterclockwise direction;
(ii) interior edges that are traversed twice, both times in the same direction. For the boundary edges there is no choice of which of the substitution rules to use in order for the curve to stay inside the triangle. Since the interior edges bound two distinct cells, we need to make use of both alternatives, and our convention is to go into the central cell first and the peripheral cell the second time we traverse the edge. At level $m$, the identified points are of the form $\frac{k}{3\cdot 4^{m}}$, and in this example the identifications persist from level to level. The boundary of the triangle is the image of a Cantor set in the circle, as shown in Figure 2.20.

\begin{figure}
\includegraphics[scale=.4,trim = 0mm 0mm 0mm 0mm, clip]{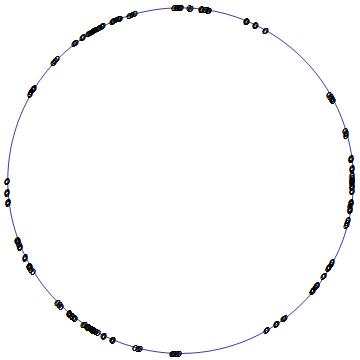}
\caption*{FIGURE 2.20. The Cantor set on the circle whose image under $\gamma$ corresponds to the boundary of $T$.}
\end{figure}

\section {The Pentagasket}
In this section we use the Peano curve $\gamma$ for the Pentagasket (PG) and its approximations $\gamma_m$ to define both an energy and a Laplacian that are self-similar and symmetric with respect to the dihedral-5 symmetry group that acts on PG. Since PG belongs to Kigami's PCF class, the energy and Laplacian are unique up to constant multiples, so our construction may be compared with previous constructions, in particular \cite{Spec}.

Let $\Gamma_m$ denote the graph determined by $\gamma_m$ as described in section two. We initially assign conductances to edges as follows:
\begin{equation}
c(x,y)=\begin{cases}
1\text{ if }[x,y]\text{ has length } \frac{1}{5^{m+1}}\\
b\text{ if }[x,y]\text{ has length } \frac{3}{5^{m+1}}
\end{cases},\end{equation}
where $b$ is a constant to be determined. Then set 
\begin{equation}
E_m(u)=\sum_{x\sim y} c(x,y)(u(x)-u(y))^2.
\end{equation}
Given a function $u$ on the vertices of $\Gamma_m$, we let $\bar u$ denote the extension of $u$ to the vertices of $\Gamma_{m+1}$ that minimizes the value of $E_{m+1}$, and then consider the renormalization equation
\begin{equation}
E_{m+1}(\bar u)=r E_m(u)
\end{equation}
for a renormalization factor $r$ to be determined. There is a unique choice of values for $b$ and $r$ for which (3.3) holds, namely
\begin{equation}
\begin{cases}
b=\frac{1+\sqrt{161}}{10} \\
r= \frac{\sqrt{161}-9}{8}
\end{cases}.
\end{equation}
Note that the value of $r$ agrees with the value given in \cite{Spec}.
Then we define the renormalized energy
\begin{equation}
\mathcal{E}_m (u)=r^{-m}E_m (u),
\end{equation}
making $\mathcal{E}_m (u)$ an increasing sequence, so 
\begin{equation}
\mathcal{E} (u)=\lim_{m\to\infty}\mathcal{E}_m (u).
\end{equation}
Here $u$ is any continuous function on the circle that respects all identifications made by $\gamma_m$ for all $m$, hence $u(t)=u(s)$ if $\gamma(t)=\gamma (s)$. We define $dom\, \mathcal{E}$ to be those functions with $\mathcal{E}(u)<\infty$. If $u,v\in dom\,\mathcal{E}$, then 
\begin{equation}
\mathcal{E}(u,v)=\lim_{m\to\infty}\mathcal{E}_m(u,v),
\end{equation} 
and $dom\, \mathcal{E}\mod $ constants becomes a Hilbert space with this inner product.

The standard Lebesgue measure on the circle is pushed forward by $\gamma$ to the standard self-similar probability measure $\mu$ on the pentagasket. We may define the Laplacian via the weak formulation
\begin{equation}
\mathcal{E}(u,v)=-\int (\Delta u)vd\mu,
\end{equation}
for all $v\in dom\,\mathcal{E}$. Note that this is the Neumann Laplacian if we choose a boundary for the pentagasket. Typically one takes the boundary to be either the five points of the initial star, or just three of the five. It is important to observe that for PG, in contrast to the SG, there is nothing special about the local geometry in a neighborhood of a boundary point: there are infinitely many points with the same local geometry. Because Neumann boundary conditions are "natural," the Laplacian behaves the same way at all these locally isometric points regardless of which ones we designate as the boundary.

Of course we want a simpler method to give $\Delta u$ as a limit of discrete graph Laplacians $\Delta_m$ on $\Gamma_m$, and for this we need to approximate the measure $\mu$ by a discrete measure on the vertices of $\Gamma_m$ to get a formula of the form (1.1). For each point of the form $\frac{k}{5^{m+1}}$ we assign the weight $\frac{1}{5^{m+1}}$ if $k\equiv 0\mod 5$, and $\frac{2}{5^{m+1}}$ if $k\equiv 1$ or $4\mod 5$, and $\mu(x)$ is the sum of the weights of all the points in the equivalence class $x$. Of course the equivalence class just consists of the singleton $\frac{k}{5^m+1}$ if $k\equiv 0\mod 5$, so the $\mu(x)=\frac{1}{5^{m+1}}$, while if $k\equiv 1$ or $4\mod 5$, then the equivalence class consists of two points of the same type, so $\mu(x)=\frac{4}{5^{m+1}}$.

\begin{table}[htb!]
\begin{adjustwidth}{-1.2in}{-1in}
\begin{center}
$\begin{array}{|c|c|c|c|c|c|c|c|c|c|c|c|c} 
\hline
\multicolumn{3}{|c|}{\text {Level 1}}
& \multicolumn{3}{|c|}{\text{Level 2}}
& \multicolumn{3}{|c|}{\text{Level 3}}
& \multicolumn{3}{|c|}{\text{Level 4}}\\
\hline
\#  & \text{Mult} & \text{Eigenvalue} &  \# & \text{Mult} & \text{Eigenvalue}& \# & \text{Mult} & \text{Eigenvalue}& \# & \text{Mult} & \text{Eigenvalue}\\
\hline 1 & 1 & 0 & 1 & 1 & 0 & 1&1& 0 & 1 & 1 & 0\\ \hline 
\hline 2 & 2 & 28.6410 & 2 & 2 & 12.5186 &2&2& 12.6700&2& 2 &12.6832\\ \hline 
\hline 4&2&28.9251&4&2&30.6109 &4&	2&31.3706 &4&2 &31.4492 \\ \hline
\hline  6&2&119.5409&6 &5& 143.2049&6& 5& 135.7523&6& 5& 137.4025  \\  \hline 
\hline 8&2&132.5555&11 &1& 168.8936&11& 1& 164.5714&11& 1& 166.9378\\ \hline
\hline 10&1&135.5536&12&2&182.4264&12& 2& 182.3916&12& 2 &185.2678\\ \hline
\hline &&& 14&2&215.2990&14&2& 239.2249&14& 2& 	244.1480\\ \hline
\hline  &&& 16&5&415.7326&16&5& 331.9515&16& 5& 340.1929\\ \hline
\hline &&& 21&2&430.6319&21& 2& 435.5986&21& 2& 453.4902\\ \hline
\hline  &&& 23&2&454.5580&23& 2& 562.4423&23& 	2 	&596.8892 \\ \hline
\hline &&&	25&1&463.5525&25& 1 &629.634&25& 1 &677.4916\\ \hline
\hline  & && 26&5&597.7066 &26&20& 1552.9561&26&20&1472.1417\\ \hline
\end{array}$
\vspace{5mm}
\caption{ Eigenvalues of the Pentagasket}
\end{center}
\end{adjustwidth}
\end{table}

In defining the discrete Laplacian via (1.1), we renormalize the conductances by $r^{-m}$ to make $\Delta=\lim_{m\to\infty}\Delta_m$ without further renormalization factors. In Table 3.1 we show the eigenvalues with multiplicities for $m=$1, 2, 3, and $4$. It is clear that the eigenvalues are converging to a limit as $m\to\infty$, and the multiplicities and values up to a constant multiple agree with those computed in \cite{Spec}.

In Figures 3.1 and 3.2 we show graphs of eigenfunctions for $m=$2, and 3 as functions on the circle (here [0,1] with $0\equiv 1$) that respect identifications. We emphasize that this gives a new way to visualize eigenfunctions. The requirement that the functions respect identifications is a very stringent requirement. There are essentially no "nice" functions (for example, differentiable functions) with this property except for constants, and it is possible to visually recognize the patterns characteristic of these graphs.

\begin{table}[htb!]
\centering
\begin{adjustwidth}{-1.2in}{-1.0in}

\begin{tabular}{  c c c c }
$ \text{Eigenfunction \# 2 \& 3} $ &   $\text{Eigenfunction \# 4 \& 5}$ \\
\includegraphics[height=1.25in,width=1.75in]{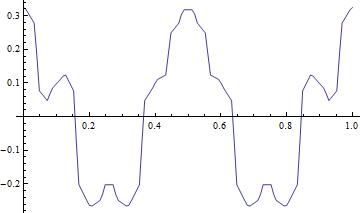}  \includegraphics[height=1.25in,width=1.75in]{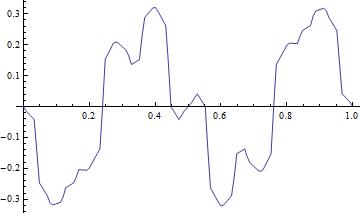}&
 \includegraphics[height=1.25in,width=1.75in]{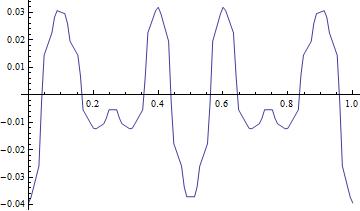}  \includegraphics[height=1.25in,width=1.75in]{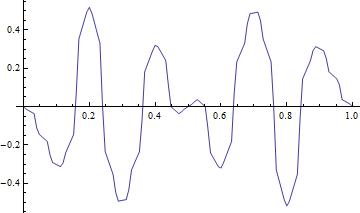}\\ 
\\
 $\text{Eigenfunction \# 11} $ \\
 \includegraphics[height=1.25in,width=1.75in]{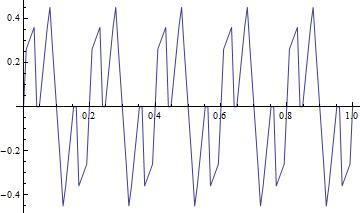}& \\
\end{tabular}
\end{adjustwidth}
\caption*{FIGURE 3.1. Eigenfunctions of the Pentagasket at Level 2}
\end{table}

\begin{table}[h!]
\centering
\begin{adjustwidth}{-1.2in}{-1.0in}

\begin{tabular}{  c c c c }
$ \text{Eigenfunction \# 2 \& 3} $ &   $\text{Eigenfunction \# 4 \& 5}$ \\
\includegraphics[height=1.25in,width=1.75in]{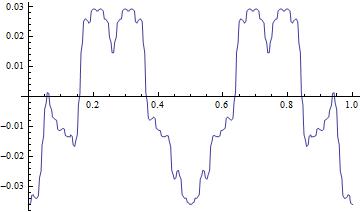}  \includegraphics[height=1.25in,width=1.75in]{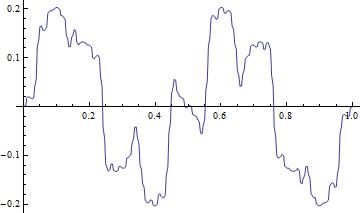}&
 \includegraphics[height=1.25in,width=1.75in]{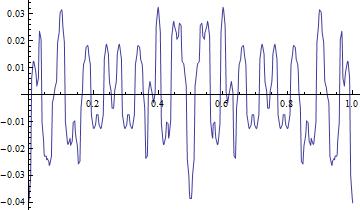}  \includegraphics[height=1.25in,width=1.75in]{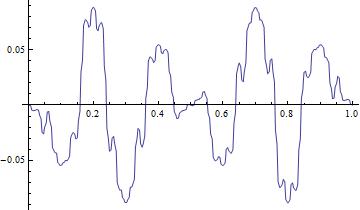}\\ 
$\text{Eigenfunction \#6}$ & $\text{Eigenfunction \#11}$\\
\includegraphics[height=1.25in,width=1.75in]{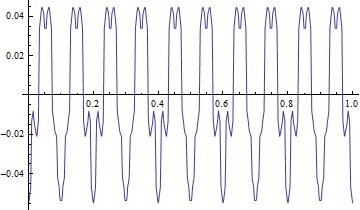}&\includegraphics[height=1.25in,width=1.75in]{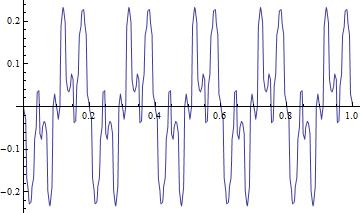}
\\
$\text{Eigenfunction \# 16}  $ & $\text{Eigenfunction \# 51} $ \\
\includegraphics[height=1.25in,width=1.75in]{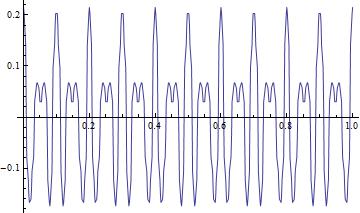}&
\includegraphics[height=1.25in,width=2.25in]{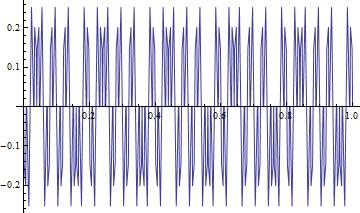} \\
\\
\end{tabular}

\caption*{FIGURE 3.2. Eigenfunctions of the Pentagasket at Level 3}
\end{adjustwidth}
\end{table}

The connection with the dihedral-5 group of symmetries is very straightforward in this realization. Rotations through the angle $\frac{2\pi j}{5}$ in PG correspond to translations $t\to t+\frac{j}{5}$, and reflections correspond to $t\to \frac{j}{5}-t$. For eigenspaces of multiplicity two, such as $\#$2 $\&$3 and $\#$4 $\&$5, we can find a basis of eigenfunctions satisfying the symmetry condition $u(1-t)=u(t)$ and the skew-symmetry condition $u(1-t)=-u(t)$, and we have automatically enforced this dichotomy in our choice of graphs. Eigenfunctions corresponding to multiplicity one, such as $\#$11, will have periodicity $u\left (t+\frac{1}{5}\right)=u(t)$, and will be skew-symmetric with respect to all reflections ( except in the trivial case of constants), as was shown in \cite{Spec}. Within eigenspaces of multiplicity five it is possible to find eigenfunctions that are symmetric with respect to both translations and reflections, such as $\#$6 (out of $\#$ 6-10). All these symmetries are immediately visible from the graphs.

We can also see miniaturization of eigenfunctions. Consider an eigenspace of multiplicity two with eigenvalue $\lambda$. Then $5r^{-1}\lambda$ will be an eigenspace of multiplicity five, and if $u$ is the reflection symmetric $\lambda$-eigenfunction then $u(5t)$ is the reflection symmetric periodic $5r^{-1}\lambda$ eigenfunction. This fact is proven in \cite{Spec}, but is visually obvious from the graphs of $\#$ 2 and $\#$ 4 on level 2 and $\# 6$ and $\# 16$ on level 3. Similarly, if $u$ is a $\lambda$-eigenfunction for an eigenspace of multiplicity one with $\lambda\neq 0$, then it is shown in \cite{Spec} that $u$ is reflection skew-symmetric and $u(5t)$ is a reflection skew-symmetric periodic $5r^{-1}\lambda$-eigenfunction, and this eigenspace has multiplicity five.This is seen in $\#11$ on level 2 and $\#51$ on level 3. Here we find the periodic eigenfunctions in the multiplicity five eigenspaces via periodization. 
In Figure 3.3 we show the log-log graphs of the eigenvalue counting function, $\rho(x)=\#\{\lambda_j \le x \}$ and the Weyl Ratio, $WR(x)=\frac{\rho(x)}{x^\beta}$ for $\beta=\frac{\text{log}5}{\text{log}5-\text{log}5}\approx 0.675$ on levels 2,3,and 4. We can begin to see evidence of the asymptotic multiplicative periodicity of the Weyl ratio, $WR(5x)\approx WR(x)$, in the level 4 graph
\vspace{.1in}

\begin{table}[h!]
\centering
\begin{adjustwidth}{-1.0in}{-1.0in}
\vspace{3mm}
\begin{tabular}{  c c c c }
$\text{Eigenvalue Counting Function}$\\
\\
$\text{ Level 2}$ & $\text{ Level 3}$ & $\text{Level 4}$ \\
\\
 \includegraphics[height=1.25in,width=1.75in]{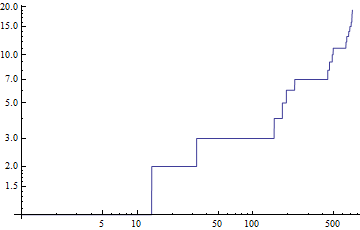}&  \includegraphics[height=1.25in,width=1.75in]{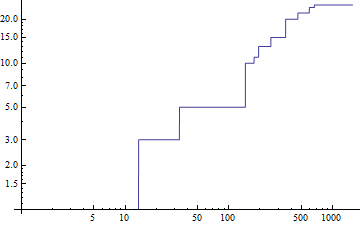} &  \includegraphics[height=1.25in,width=1.75in]{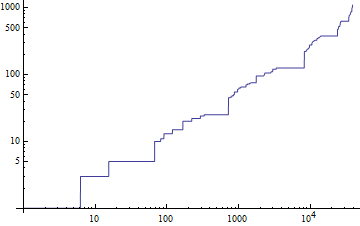}\\ 
\\
$\text{Weyl Ratios}$ \\
\\
$ \text{Level 2}$ & $ \text{Level 3}$ & $\text{Level 4} $\\
\\
 \includegraphics[height=1.25in,width=1.75in]{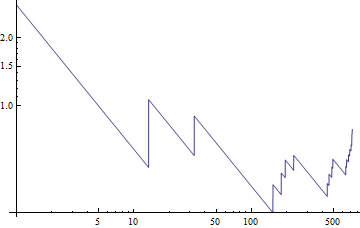} &  \includegraphics[height=1.25in,width=1.75in]{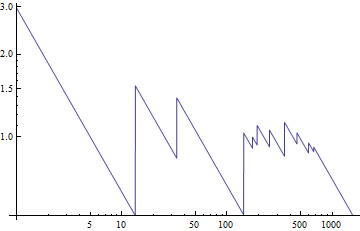} &  \includegraphics[height=1.25in,width=1.75in]{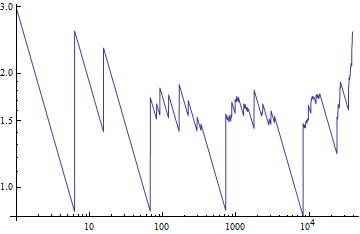}\\
\end{tabular}
\vspace{1mm}

\caption*{FIGURE 3.3. We show the log-log graphs of the eigenvalue counting function, $\rho(x)=\#\{\lambda_j \le x \}$ and the Weyl Ratio, $WR(x)=\frac{\rho(x)}{x^\beta}$ for $\beta=\frac{\text{log}5}{\text{log}5-\text{log}5}\approx 0.675$ on levels 2,3,and 4. We can begin to see evidence of the asymptotic multiplicative periodicity of the Weyl ratio, $WR(5x)\approx WR(x)$, in the level 4 graph.}
\end{adjustwidth}
\end{table}

\section{The Octagasket}

It is believed that the Octagasket (OG) has a symmetric self-similar energy $\mathcal{E}$ with $dom\mathcal{E}$ containing only continuous functions (equivalently, points have positive capacity), and an associated Laplacian defined by $(3.8)$ with the standard self-similar measure $\mu$. There is no proof for these conjectures at present. Experimental evidence for the existence of the Laplacian was provided in \cite{BHS}, and this paper will provide independent evidence. The Peano curve $\gamma$ and its approximations $\gamma_m$ give us a sequence of graph approximations $\Gamma _m$, and we can define a graph energy $E_m$ by giving single edges conductance 1 and double edges conductance $2$. We expect that there is an energy renormalization factor $r<1$ such that 
\begin{equation}
\mathcal{E}(u,v)=\lim_{m\to\infty} r^{-m} E_m (u,v)\text{ for }u,v\in dom\mathcal{E},
\end{equation}
and we will give a rough estimate for $r$ based on our experimental data. We note that OG is not PCF, and there is no clean formula relating $E_{m+1}(\bar u)$ and $E_{m}( u)$ when $\bar u$ is the $E_{m+1}$ minimizing extension of $u$ analogous to (3.3). It is natural to approximate $\mu$ by the discrete measure on vertices of $\Gamma_m$ that assigns weights $\frac{1}{2\cdot 8^{m+1}}$ to each point $\frac{k}{2\cdot 8^{m+1}}$, so outer vertices get weight $\frac{1}{8^{m+1}}$ and inner vertices get weight $\frac{2}{8^{m+1}}$. We define an unnormalized discrete Laplacian $\Delta_m$ on $\Gamma_m$ by 
\begin{equation}
-\Delta_m u(x)=4\left(u(x)-\text{Ave}(u(y))\right)
\end{equation}
where $\text{Ave}(u(y))$ denotes the average value of $u$ on the 2 or 4 neighboring points. (The constant 4 is just for convenience.) Note that this is a different convention than the one used in the case of PG. Now we expect
\begin{equation}
-\Delta u=lim_{m\to\infty} \left (\frac{8}{r} \right )^m\Delta_m u.
\end{equation}
The ratios of the corresponding eigenvalues for $\Delta_m$ and $\Delta_{m+1}$ give an estimate for the renormalization factor $\frac{8}{r}\approx 14.9$, which means
 $r\approx 0.537$. In Table 4.1 we give the eigenvalues for $m=1,2,3$ and the ratios.
In Table 4.2 we give the same data for the renormalized eigenvalues, where we have multiplied by $(\frac{8}{r})^m$.
It is clear that the multiplicities remain the same as $m$ increases (at least in the lower portion of the spectrum). The values are in reasonable agreement with those found in \cite{BHS}.

\begin{table}
\begin{adjustwidth}{-.5in}{-.5in}
\begin{center}
$\begin{array}{|c|c|c|c|c|c|c|c|c|||c|c|c|} 
\hline
\multicolumn{3}{|c|}{\text {Level 1}}
& \multicolumn{3}{|c|}{\text{Level 2}}
& \multicolumn{3}{|c|}{\text{Level 3}}
&\multicolumn{2}{|c|}{\text{Ratio}} \\
\hline
\#  & \text{Mult} & \text{Eigenvalue} &  \# & \text{Mult} & \text{Eigenvalue}& \# & \text{Mult} & \text{Eigenvalue}& \frac{\lambda_1}{\lambda_2}&\frac{\lambda_2}{\lambda_3}\\
\hline 1 & 1 & 0 & 1 & 1 & 0 & 1&1&0&&\\ \hline 
\hline 2 & 2 & 0.111 & 2 & 2 & 0.0074& 2 & 2 & 0.0005 &14.802 & 14.938\\ \hline 
\hline 4 & 2 & 0.396 & 4 & 2 & 0.0282 & 4 & 2 & 0.0018 & 14.027 & 14.908\\ \hline
\hline 6 & 2 & 0.770& 6 & 2 &  0.0570  & 6 & 2 & 0.0038 & 13.495 & 14.897\\  \hline 
\hline 8 & 3 & 1.171 & 8 & 1 & 0.0784 & 8 & 1 &0.0052 &&14.960\\ \hline
\hline 11 &2 & 1.276 & 9 & 2 & 0.1108 & 9 & 2 & 0.0074 &&14.794\\ \hline
\hline 13 & 2 & 1.500 & 11 & 2 & 0.1157&11&2&0.0077&&14.852\\ \hline
\hline 15 & 2 & 1.506& 13 & 2 & 0.1251&13&2&0.0083&&14.941\\ \hline
\hline 17 & 2 & 3.109 & 15 & 2 & 0.1263&15&2&0.0084&&14.971\\ \hline
\hline 19 & 2 & 3.299 & 17 & 2 & 0.2291&17&2&0.0154&&14.803\\ \hline
\hline 21 & 2 &3.465 & 19 & 1& 0.2362&19&1&0.0157&&15.034\\ \hline
\hline 23 & 4  & 4.000 & 20 & 2 & 0.2412&20&2&0.0165&& 14.590\\ \hline
\hline 27& 2 &4.534 & 22 & 2 &  0.2771  &22&2&0.0189&&14.605 \\ \hline
\hline 29 & 2 & 4.700 & 24 & 1 & 0.3021&24&1&0.0205&&14.691 \\ \hline
\hline 31 & 2 &4.890 & 25 & 2 & 0.3961&25&2&0.0282&&14.027 \\ \hline
\hline 33 & 2 & 6.493 & 27 & 2 & 0.4237 &27&2&0.0300&&14.120 \\ \hline
\hline 35 & 2 & 6.499  &29&2&0.4261&29&2&0.0301&&14.136  \\ \hline
\hline 37 & 2 & 6.723 & 31& 2 &0.4561& 31 &2&0.0321&&14.204 \\ \hline
\hline 39 & 3 & 6.828 & 33&2& 0.5912&33&2&0.0425&&13.909\\ \hline
\hline 42 & 2 & 7.229 & 35 & 2 & 0.5984&35&2&0.0428&&13.970\\ \hline
\hline 44 & 2 &  7.603 &37& 1&  0.6249&37&1&0.0445&&14.035\\ \hline
\hline 46 & 2 &  7.889 & 38 &2& 0.6650&38&2&0.0479&&13.871\\ \hline
\hline 48           & 1 & 8.000  &40&1 & 0.7536&40&1&0.0542&&13.903\\ \hline
\hline &&&41& 2 &0.7700 &41& 2 &0.0570&&13.495\\ \hline
\hline &&&43&2&0.8100&43&2&0.0598&&13.526\\ \hline
\hline &&&45&2&0.8525&45&2&0.0631&&13.504\\ \hline
\hline &&&47&2&0.8866&47&2&0.0656&&13.507\\ \hline
\hline &&&49&2&0.9328&49&2&0.0696&&13.401\\ \hline
\hline &&&51&2&0.9772&51&2&0.0729&&13.389\\ \hline
\hline &&&53&2&0.9891&53&2&0.0735&&13.446\\ \hline
\hline &&&55&1&1.0151&55&1&0.0750&&13.532\\ \hline
\hline &&&56&1&1.0314&56&1&0.0784&&13.146\\ \hline
\hline &&&57&3&1.1715&57&1&0.0843&&\\ \hline
\hline &&&60&2&1.1810&58&1&0.0901&&\\ \hline
\hline &&&62&2&1.1933&60&2&0.0973&&\\ \hline
\hline &&&64&2&1.2025&62&2&0.1024&&\\ \hline
\hline &&&66&2&1.2201&64&1&0.1042&&\\  \hline
\end{array}$
\vspace{5mm}
\caption{Eigenvalues of OG}
\end{center}
\end{adjustwidth}

\end{table}

\begin{table}
\centering
$\begin{array}{|c|c|c|c|c|c|c|c|c|c|c|}
\hline
\multicolumn{3}{|c|}{\text {Level 1}}
& \multicolumn{3}{|c|}{\text{Level 2}}
& \multicolumn{3}{|c|}{\text{Level 3}} \\
\hline
\#  & \text{Mult} & \text{Eigenvalue} &  \# & \text{Mult} & \text{Eigenvalue}& \# & \text{Mult} & \text{Eigenvalue} \\
\hline 1 & 1 & 0 & 1 & 1 & 0 & 1&1&0\\ 
\hline 2 & 2 &1.652 & 2 & 2& 1.662 & 2& 2 &  1.659 \\ \hline 
\hline 4 & 2 & 5.902 &4& 2 & 6.269 & 4 & 2 & 6.265\\ \hline
\hline 6 & 2 &  11.473 & 6 & 2 & 12.667 & 6  & 2 &  12.669 \\  \hline 
\hline 8 & 3 & 17.456&8 & 1 & 17.418 & 8& 1 & 17.348 \\ \hline
\hline 11 & 2 & 19.018 &9 & 2 & 24.614 & 9 & 2 & 24.789 \\ \hline
\hline 13 & 2 & 22.355 & 11& 2 &25.693 & 11& 2&25.775  \\ \hline
\hline 15 & 2 & 22.439 & 13 & 2 & 27.773 & 13 & 2 & 27.697 \\ \hline
\hline 17 & 2 &  46.337 & 15& 2 & 28.039& 15 & 2 &  27.905\\ \hline
\hline 19 & 2& 49.160 & 17 & 2 &  50.878 & 17 & 2 & 51.209 \\ \hline
\hline 21 & 2 &  51.642 & 19 & 1 &  52.445 & 19& 1 & 51.974 \\ \hline
\hline 23       & 4 & 59.600  & 20 & 2 &  53.551 & 20 & 2&  54.688 \\ \hline
\hline 27    & 2 &  67.557 & 22 & 2 & 61.527 & 22& 2 & 62.769\\ \hline
\hline 29 & 2 &  70.039 & 24& 1 &  67.071 & 24& 1 & 68.021\\ \hline
\hline 31  & 2 & 72.862 & 25 & 2 &   87.942&25 & 2 & 93.410\\ \hline
\hline 33 & 2 & 96.760 & 27 & 2 &  94.083  &27 & 2 & 99.274  \\ \hline
\hline 35 & 2 & 96.844 &29 & 2 &  94.616 &29 & 2 & 99.727  \\ \hline
\hline 37 & 2 & 100.181  & 31& 2 &101.269 & 31 & 2 & 106.231 \\ \hline
\hline 39 & 3 & 101.743 &33 & 1 &  131.263 &33& 2 & 140.612\\ \hline
\hline 42 & 2 & 107.726 & 35 & 2 & 132.857  & 35 & 2 & 141.695\\ \hline
\hline 44 & 2 & 113.297 & 37& 1 & 138.734 & 37 & 1 & 147.282\\ \hline
\hline 46 & 2 & 117.548 & 38& 2 &  147.636& 38 & 2 &158.585\\ \hline
\hline 48 & 1 & 119.200 & 40  &  1 & 167.317 & 40 & 1 & 179.305\\ \hline
\hline &&&41&2 & 170.961 & 41 & 2 & 188.748 \\ \hline
\hline &&&43&2&179.830&43&2&198.093\\ \hline
\hline &&&45&2&189.270&45&2&208.827\\ \hline
\hline &&&47&2&196.836&47&2&217.132\\ \hline
\hline &&&49&2&207.097&49&2&230.250\\ \hline
\hline &&&51&2&216.965&51&2&241.441\\ \hline
\hline &&&53&2&219.590&53&2&243.324\\ \hline
\hline &&&55&1&225.377&55&1&248.161\\ \hline
\hline &&&56&1&228.994&56&1&259.535\\ \hline
\hline &&&57&3&260.100&57&1&278.937\\ \hline
\hline &&&60&2&262.198&58&2&298.082\\ \hline
\hline &&&62&2&264.926&60&2&322.073\\ \hline
\hline &&&64&2&266.978&63&2&338.988\\ \hline
\hline &&&66&2&270.885&64&1&344.883\\  \hline
\end{array}$
\vspace{5mm}
\caption{Renormalized eigenvalues of OG}
\end{table}

There are certain patterns to the eigenvalues of $-\Delta_m$ that are quite striking. The first is that, since the graph $\Gamma_m$ is bipartite (the even and odd numbers of $k$ in $\frac{k}{2\cdot 8^m}$ alternate), eigenvalues come in pairs: if $-\Delta_m u=\lambda u$, then

\begin{equation}
-\Delta_m u^*=\left(8-\lambda \right )u^*,\text{ where}
\end{equation}

\begin{equation}
u^*\left (\frac{k}{2\cdot 8^m} \right)=(-1)^k u \left (\frac{k}{2\cdot 8^m} \right),
\end{equation}
It does not seem that this observation has any consequences for the spectrum of $-\Delta$, since only the lower portion of the spectrum of $-\Delta_m$ is relevant. A more significant observation is the miniaturization of eigenfunctions: each eigenvalue of $-\Delta_m$ is also an eigenvalue of $-\Delta_{m+1}$ with the same multiplicity (with three exceptions to be explained below), and the corresponding eigenfunction of $-\Delta_m$ is "miniaturized" to create the eigenfunctions of $-\Delta_{m+1}.$ The rule for miniaturization depends on the representation of the dihedral-8 symmetry group that the eigenspace corresponds to, as explained in \cite{BHS}. There are three 2-dimensional representations, labeled $2_1,\,2_2,\,2_3$ and four 1-dimensional representations labeled $1\pm \pm$ for symmetry or skew symmetry with respect to the reflections through the centers of edges of the octagon and the reflections through the vertices of the octagon. The 2-dimensional representations miniaturize to representations of the same type, while for the 1-dimensional representations, the miniaturization rule is 
\begin{equation}
\begin{cases}
1++\to1++\\
1+-\to 1+-\\
1-+\to1++\\
1--\to1+-
\end{cases}
\end{equation}

For an example of the first rule, a constant miniaturizes to a constant. An example of the third rule is illustrated by Figure 4.1.\\

\begin{figure}[h!]
\begin{minipage}[h!]{0.48\linewidth}
\begin{center}
\includegraphics[trim = 23mm 10mm 0mm 0mm, clip,height=2.5in]{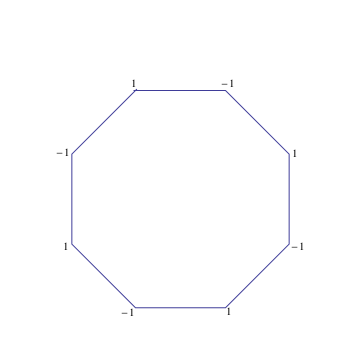}
\end{center}
m=0
\end{minipage}
\hspace{0.01cm}
\begin{minipage}[h!]{0.48\linewidth}
\begin{center}
\includegraphics[trim = 15mm 15mm 0mm 0mm, clip,height=2.5in]{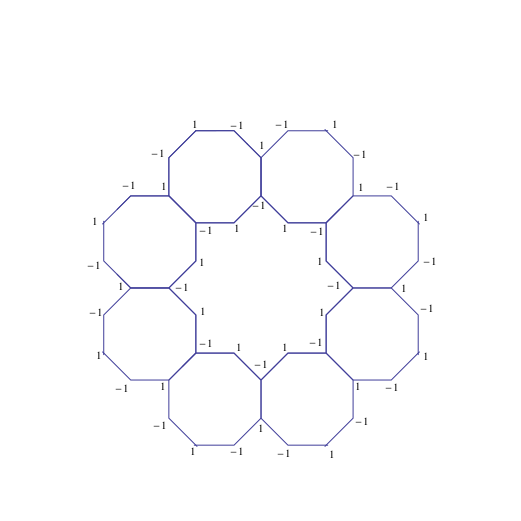}
\end{center}
m=1
\end{minipage}
\caption{Shows the miniaturization rule for the third symmetry type, $1-+\to 1+-$.}
\label{fig:figure4.1}
\end{figure}

%
%
%

(Since the $1+-$ and $1-+$ representations do not occur for m=0, it is not practical to illustrate the other two cases.)

The exceptional miniaturizations correspond to the eigenvalues $4\pm2\sqrt{2}$ and $4$. In fact the multiplicity of $4\pm2\sqrt{2}$ is three for each $m\geq 1$. Figure 4.2 shows the $m=1$ case with symmetry types $1++$, $1+-$ and $1-+$.


\begin{figure}[h!]
\includegraphics[trim = 35mm 35mm 35mm 35mm, clip,height=2.5in]{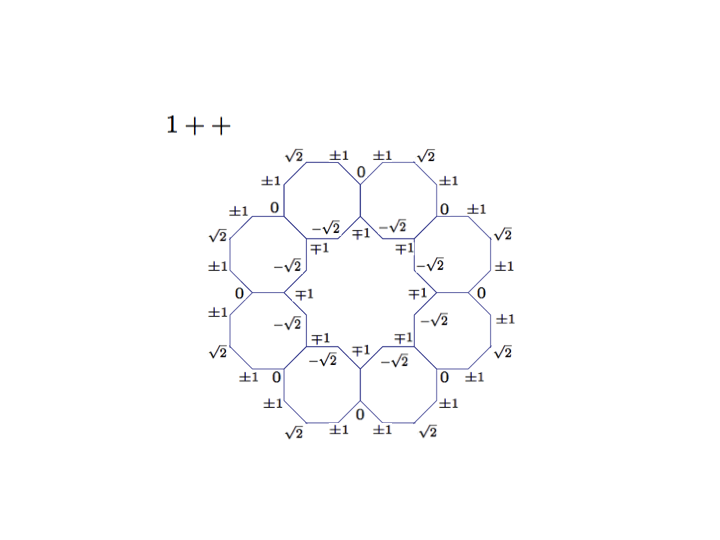}
\includegraphics[trim = 35mm 35mm 35mm 35mm, clip,height=2.5in]{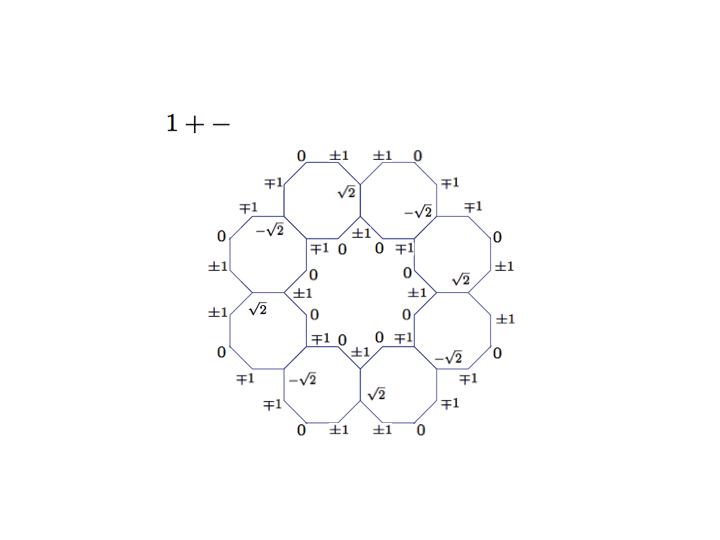}\\
\includegraphics[trim = 35mm 35mm 35mm 35mm, clip,height=2.5in]{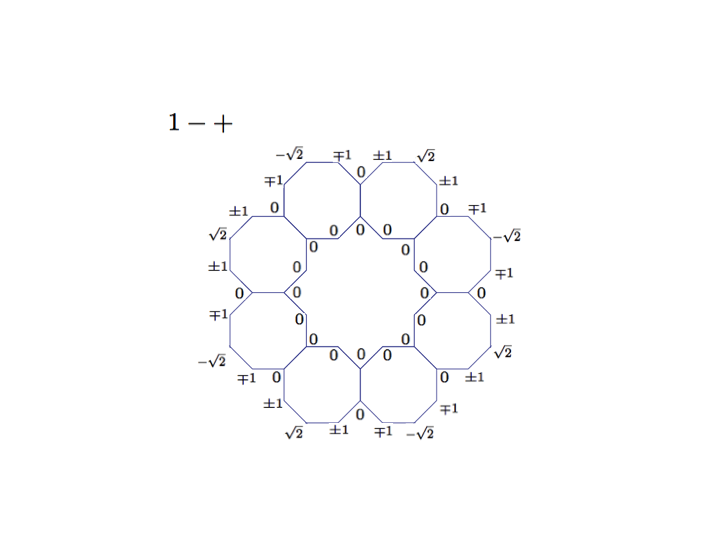}
\caption{Shows the $m=1$ case for symmetry types $1++$, $1+-$ and $1-+$ for eigenvalue $4\pm 2\sqrt{2}$.}
\label{fig:figure2}
\end{figure}

The $\lambda=4$ eigenspace has multiplicity that grows with $m$ (4, 20, 164 for $m=1,\,2,\,3$). 
Again, it does not appear that these exceptional eigenspaces contribute to the spectrum of $-\Delta$, which appears to only have multiplicities one and two.

We call an eigenvalue (or eigenspace) of $-\Delta_m$ \emph{primitive} if it is not an eigenvalue of $-\Delta_{m-1}$, otherwise we call it a \emph{derived} eigenvalue. In the limit we expect that the primitive eigenspaces of $-\Delta$ do not contain any miniaturized eigenfunctions. Just as in the case of PG, as proven in \cite{Spec}, there are restrictions of the types of 1-dimensional representations that can appear in primitive eigenspaces. 

\begin{table}
\centering
\begin{adjustwidth}{-1.0in}{-1.0in}
\begin{tabular}{  c c c c }
$\text{Eigenfunction \# 2 \& 3}$ & $\text{Eigenfunction \# 4 \& 5}$ \\
\\
\includegraphics[height=1.25in,width=1.75in]{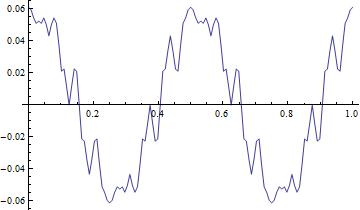}  \includegraphics[height=1.25in,width=1.75in]{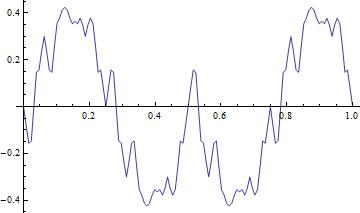}&
 \includegraphics[height=1.25in,width=1.75in]{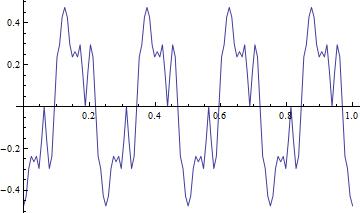}  \includegraphics[height=1.25in,width=1.75in]{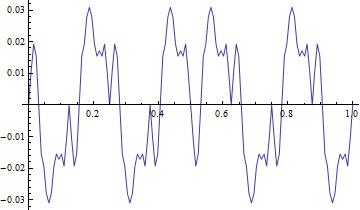}\\ 
\\
$ \text{Eigenfunction \# 6 \& 7}$ \\
\\
\includegraphics[height=1.25in,width=1.75in]{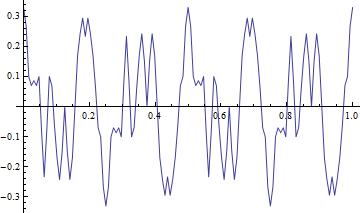} \includegraphics[height=1.25in,width=1.75in]{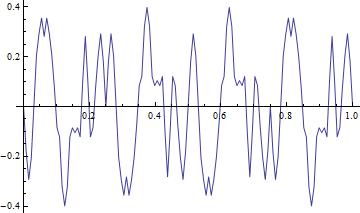}& 
\end{tabular}
\vspace{1mm}

\caption*{FIGURE 4.3: Eigenfunctions of OG at Level 1}
\end{adjustwidth}
\end{table}

\begin{theorem}
A primitive eigenfunction of $-\Delta_m$, other than a constant, cannot have $1++$ or $1+-$ symmetry.
\end{theorem}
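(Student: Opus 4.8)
The plan is to argue by contradiction using the miniaturization/self-similar structure of $\Gamma_m$ together with the fact that the curve $\gamma_m$ passes around the octagon and its sub-octagons in a prescribed orientation. Suppose $u$ is a primitive $\lambda$-eigenfunction of $-\Delta_m$ with $1++$ or $1+-$ symmetry; normalize so that $u$ is not constant (so $\lambda\neq 0$). The key point is that $1++$ symmetry (symmetry with respect to \emph{both} families of reflections of the octagon) forces $u$ to be constant on each orbit of the full dihedral-8 action on the boundary vertices of the outermost octagon, and by the local structure of the substitution rule this constancy propagates: the values of $u$ at the vertices of $\Gamma_m$ that lie on any single $m'$-cell boundary ($m'<m$) are determined by, and in fact equal to, the value of $u$ at the corresponding vertex of the coarser graph $\Gamma_{m-1}$. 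In other words, a $1++$ eigenfunction is forced to be a pullback of a function on $\Gamma_{m-1}$, hence $\lambda$ is an eigenvalue of $-\Delta_{m-1}$ and $u$ is derived, not primitive — contradiction. The $1+-$ case is handled the same way after noting that the relevant family of reflections (those through the edge-centers, say) together with the $45^\circ$ rotation still generates enough symmetry to force the same collapsing of values across each sub-cell.

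More concretely, first I would set up notation for how $\Gamma_m$ decomposes into eight copies of (a graph isomorphic to) $\Gamma_{m-1}$ glued along the inner vertices, recording exactly which vertices of $\Gamma_m$ get identified by $\gamma_m$ and which are the "shared" inner points; this is the combinatorial content already described in Section 2 and Figures 2.10–2.12. Second, I would write down the action of the dihedral-8 group on this decomposition and check which vertex-orbits are collapsed to single points under a given 1-dimensional representation — the $++$ and $+-$ representations are precisely the ones for which the reflection fixing a cell acts trivially on that cell, so the eigenfunction restricted to the shared boundary of adjacent cells must take a single common value. Third, I would use the eigenfunction equation $-\Delta_m u(x)=4(u(x)-\mathrm{Ave}(u(y)))$ at the inner (shared) vertices: since all neighbors forced equal by symmetry, the averaging equation becomes exactly the coarser-graph equation, so $u$ descends to an eigenfunction of $-\Delta_{m-1}$ with the same eigenvalue. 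Finally I would observe that the renormalization constant $4\to 8/r$ etc. plays no role here — this is a purely combinatorial statement about $-\Delta_m$ — so the conclusion is that $\lambda$ is a derived eigenvalue, contradicting primitivity.

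The main obstacle I anticipate is the second step: correctly identifying, from the substitution rule of Figure 2.10 and the direction conventions on the edges, exactly which boundary vertices of a sub-cell are shared with its neighbors and how the two reflection families act on them. One must be careful because the curve traverses the octagon twice (clockwise then counterclockwise), so a naive count of identifications is misleading; the genuine content is that a $1++$ or $1+-$ symmetric function cannot "see" the finest level of subdivision. I would verify the claim explicitly at level $m=1$ (using the data behind Figure 4.3, where one checks directly that no primitive $1++$ or $1+-$ eigenvalue appears), and then argue the inductive step by the self-similar gluing. A secondary subtlety is ruling out $\lambda=0$ trivially: the only $1++$ harmonic function is the constant, which is exactly the exception allowed in the statement.
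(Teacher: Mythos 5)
Your proposal is in substance the paper's own argument: restrict $u$ to a single $1$-cell, view the restriction (rescaled by $F_j^{-1}$) as a function on $\Gamma_{m-1}$, and use the symmetry under the reflections through the edge-centers of the octagon --- the first $+$ in $1+\pm$, which is exactly the reflection swapping a cell with its neighbor while fixing their junction point --- to verify the eigenvalue equation at the two shared vertices, so that $\lambda$ is already an eigenvalue of $-\Delta_{m-1}$ and $u$ is not primitive. One correction to your second and third steps: the symmetry does not force all four neighbors of a shared vertex to be equal, and for $1+-$ the function is not constant on full dihedral-$8$ orbits (it is skew under the vertex reflections); the edge-center reflection only matches the neighbors in swapped pairs, so what you actually get --- and all you need --- is that the average over the four $\Gamma_m$-neighbors equals the average over the two $\Gamma_{m-1}$-neighbors, which is how the paper states it. Likewise the ``pullback/descends'' framing points the wrong way: no claim that $u$ itself collapses to a coarser function is needed (or true in general); the map goes from one cell of $\Gamma_m$ onto $\Gamma_{m-1}$, and only the existence of the resulting $\Gamma_{m-1}$-eigenfunction matters.
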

\begin{proof}
Suppose $-\Delta_mu=\lambda u$ and $u$ is symmetric with respect to the reflections through the centers of the sides of the octagon. Then we could construct an eigenfunction $v$ of $-\Delta_{m-1}$ by blowing up $u$ on a one-cell, so $v(t)=u\left(\frac{1}{8}t\right)$ in the Peano curve parameterization, or $v(x)=u\left (F_j x \right)$, where $F_j$ is one of the contractions in the IFs that defines the octagasket. This would show that $u$ is not primitive. The only nontrivial fact to verify is that $v$ satisfies the eigenvalue equation along the "boundary" of $\Gamma_{m-1}$, since for "interior" vertices the equations $-\Delta_m u=\lambda u$ and $-\Delta_{m-1} v=\lambda v$ are identical. But this is exactly where the symmetry hypothesis comes in: in $\Gamma_m$ there will be four neighbors and in $\Gamma_{m-1}$ there will be two neighbors, but the average over the neighbors will be the same.
\end{proof}

The Peano curve we are using does not respect all the symmetries of the dihedral-8 symmetry group. However, the reflection in the line through the initial point $\gamma(0)$ is represented by the transformation $t\to t+\frac{1}{2}$, so we can sort eigenfunctions into symmetric and skew-symmetric ones with respect to this symmetry. Of course one-dimensional eigenspaces will automatically be one or the other. The two-dimensional eigenspaces will have a basis consisting of one symmetric and one skew-symmetric. Figures 4.3 and 4.4 
show the graphs of typical eigenfunctions at levels 2 and 3. We observe immediately that primitive $2_1$ and $2_3$ type eigenspaces contain a symmetric eigenfunction satisfying $u \bigr( t+\frac{1}{4}\bigl) =-u(t)$, for example $\# 2$ and $\#6$, while the primitive $2_2$ type eigenspace contains symmetric eigenfunctions satisfying $u\bigr(t+\frac{1}{8}\bigl)=-u(t)$, hence $u\bigr(t+\frac{1}{4}\bigl)=u(t)$, for example $\# 4$. A primitive $1-+$ eigenfunction will satisfy $u\bigr(t+\frac{1}{16}\bigl)=-u(t)$, hence $u\bigr(t+\frac{1}{8}\bigl)=u(t)$ while a primitive $1--$ eigenfunction will satisfy $\tilde u \bigr( t+\frac{1}{16}\bigl)=\tilde u(t)$, where
\begin{equation}
\tilde{u}(t)=\begin{cases} u(t)\text{ if }0\leq t\leq\frac{1}{2}\\
-u(t)\text{ if }\frac{1}{2}\leq t\leq 1,
\end{cases}
\end{equation}

for example $\smaller \# \hspace{0.8mm}   24$. 

Miniaturization will decrease the periods by a factor of $\frac{1}{8}$. For example, $\# \hspace{0.8mm}9$ has period $\frac{1}{16}$ (coming from the period $\frac{1}{2}$ of $\#\hspace{0.8mm}2$) and $\#\hspace{0.8mm} 25$ has period $\frac{1}{32}$ ( coming from the period $\frac{1}{4}$ of $\#\hspace{0.8mm} 4$). In Figure 4.5 we show explicitly the miniaturization of  $ \smaller\#\hspace{0.8mm} 2  \hspace{0.8mm}\&\hspace{0.8mm} 3$  at level 1 to $\#\hspace{0.8mm} 8 \hspace{0.8mm}\& \hspace{0.8mm}9$ at level 2. 

In Figure 4.6 we show the eigenvalue counting function and the Weyl ratio at levels 1,2, and 3, using the values $\beta=0.7213$ which was obtained experimentally. We do not observe any evidence of asymptotic multiplicative periodicity for the Weyl Ratio. 

\begin{table}
\centering
\begin{adjustwidth}{-1.2in}{-1.0in}

\begin{tabular}{  c c c c }
$ \text{Eigenfunction \# 2 \& 3} $ &   $\text{Eigenfunction \# 4 \& 5}$ \\
\includegraphics[height=1.25in,width=1.75in]{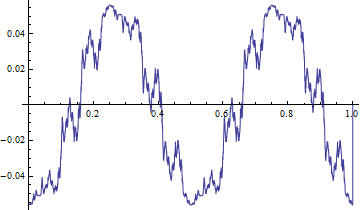}\includegraphics[height=1.25in,width=1.75in]{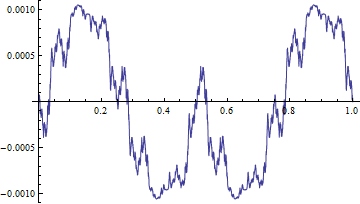}&
 \includegraphics[height=1.25in,width=1.75in]{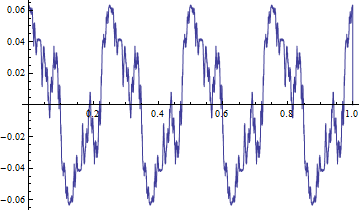}  \includegraphics[height=1.25in,width=1.75in]{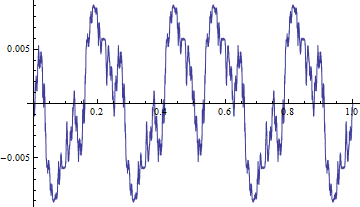}\\ 
\\
$\text{Eigenfunction \# 6 \& 7}  $ & $\text{Eigenfunction \# 8} $ \\
\includegraphics[height=1.25in,width=1.75in]{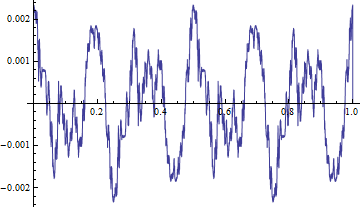} \includegraphics[height=1.25in,width=1.75in]{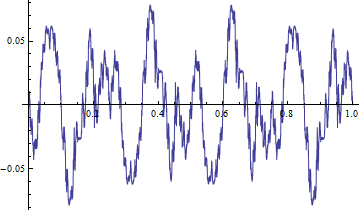}&
\includegraphics[height=1.25in,width=2.25in]{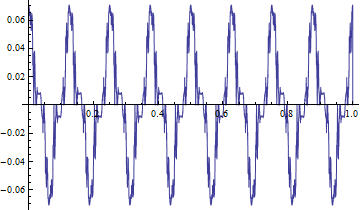} 
\\
$ \text{Eigenfunction \# 9 \& 10}  $ & $\text{Eigenfunction \# 24}$ \\
\includegraphics[height=1.25in,width=1.75in]{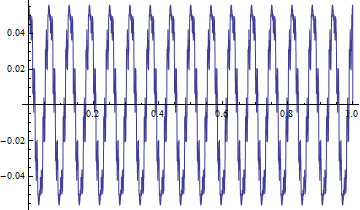} \includegraphics[height=1.25in,width=1.75in]{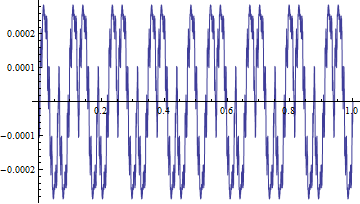}&
\includegraphics[height=1.25in,width=1.75in]{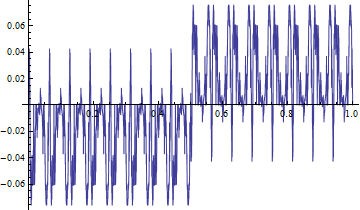}\\
\\
 $\text{Eigenfunction \# 25 \& 26}$ \\
\includegraphics[height=1.25in,width=1.75in]{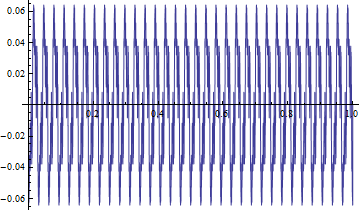} \includegraphics[height=1.25in,width=1.75in]{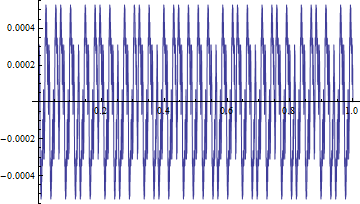}\\
\end{tabular}
\vspace{2mm}
\caption*{FIGURE 4.4: Eigenfunctions of the Octagasket at Level 3}
\end{adjustwidth}
\end{table}

\begin{table}
\centering
\begin{tabular}{ c c c  }
\includegraphics[height=1.25in,width=1.75in]{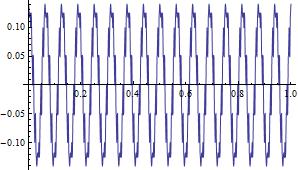} && \includegraphics[height=1.25in,width=1.75in]{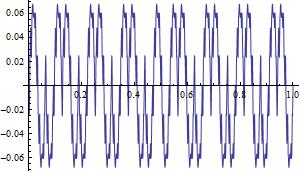}\\
\includegraphics[height=1.25in,width=1.75in]{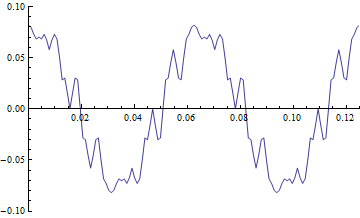} && \includegraphics[height=1.25in,width=1.75in]{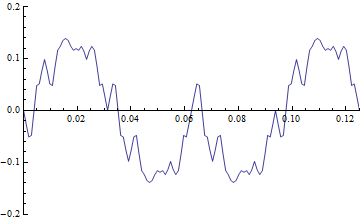}\\
\end{tabular}
\vspace{1mm}

\caption*{FIGURE 4.5. The top row shows $\#$ 9 $\&$10 at level 2, and the bottom row shows a zoom in by a factor of $\frac{1}{8}$, to be compared with $\#$ 2,3 on level 1 in Figure 4.3. }
\end{table}

It was observed in \cite{BHS} that the spectrum of OG appears to have spectral gaps (ratios $\frac{\lambda_{k+1}}{\lambda_k}$ considerably larger than 1 ). On the basis of our data we see gaps occurring for values of $k$ divisible by 16. In Table 4.3 we present this data.


\newpage 

\begin{table}
\centering
$\begin{array}{|c|c|c|} 
\hline k & 16k& \frac{\lambda_{16k+1}}{\lambda_{16k}}\\ \hline
\hline 1 & 16 & 1.8350\\ \hline
\hline 2 & 32 & 1.3236\\ \hline
\hline 7 & 112 & 1.554\\ \hline
\hline 15 & 240 & 1.168\\ \hline
\hline 54 & 864 & 1.768\\ \hline
\end{array}$
\vspace{5mm}
\caption{Spectral Gaps of the Octagasket at Level 3}
\end{table}

\begin{table}
\centering
\begin{adjustwidth}{-1.0in}{-1.0in}

\vspace{3mm}
\begin{tabular}{  c c c c c}
$\text{Eigenvalue Counting Functions}$&& \\
\\
$\text{ Level 1}$ & $\text{Level 2}$ & $\text{Level 3}$ \\
\\
  \includegraphics[height=1.25in,width=1.75in]{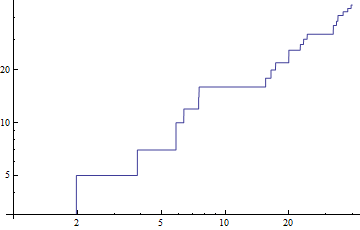}&  \includegraphics[height=1.25in,width=1.75in]{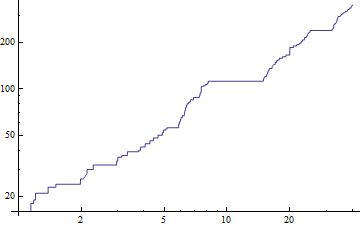}&  \includegraphics[height=1.25in,width=1.75in]{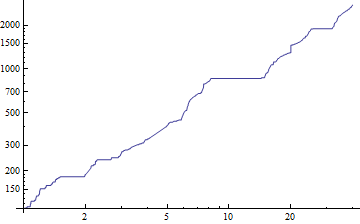}\\ 
\\
\\
$\text{Weyl Ratios}$ &&\\
\\
$ \text{Level 1} $ & $\text{Level 2}$ & $\text{Level 3}$ \\
\\
 \includegraphics[height=1.25in,width=1.75in]{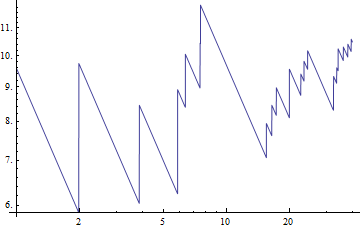} & \includegraphics[height=1.25in,width=1.75in]{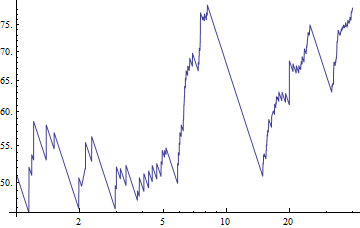} & \includegraphics[height=1.25in,width=1.75in]{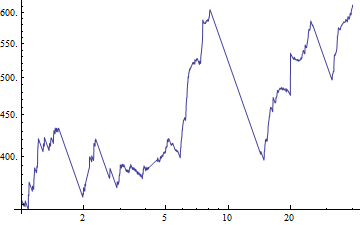}\\
\end{tabular}
\vspace{2mm}

\caption*{FIGURE 4.6. Eigenvalue counting function and Weyl Ratio of OG}
\end{adjustwidth}
\end{table}








\section{The Magic Carpet}

The approximations to $\gamma_m$ to the Peano Curve to MC give rise to a sequence of graphs $\Gamma_m$, and the graph Laplacians $\Delta_m$ are candidates for approximations to a Laplacian $\Delta$ on MC, with appropriate renormalization. We choose to normalize $\Delta_m$ by 
\begin{equation} 
-\Delta_m u(x)=12 (u(x) - \text{Ave}(u(y)) \end{equation} 
analogous to (4.2). Note, however, that some points $x$ will have neighbors that are identified with it, so more explicitly

\begin{equation}
 -\Delta_m (x) =
\begin{cases}
12u(x) - 3 \sum_{y\sim x} u(y) & \text{if $x$ has 2 identifications} \\
12 u(x) -\sum_{y\sim x} u(y) & \text{if $x$ has 6 identifications and 12 distinct neighbors}\\
8u(x) -\sum_{y\sim x} u(y) & \text{if $x$ has 6 identifications and 8 distinct neighbors}\\
\end{cases}
 \end{equation}

The third case occurs exactly when $x$ is a singular point introduced at level $m$, and the second case occurs when $x$ is a singular point introduced at level $m'<m$. We note that these approximate Laplacians agree exactly (except for a different renormalization constant) with the approximate Laplacians for zero-forms studied in \cite{kform}, and indeed the eigenvalues shown in Table 5.1 are equal to six times the eigenvalues computed in \cite{kform}. We would like to believe that a limit as in (4.3) exists for an appropriate choice of $r$. Experimentally it appears that $r\approx1.25$ and $\frac{8}{r} \approx 6.45$. This is very interesting because it means that the energy renormalization factor $r^m$ blows up. This would imply that the associated energy has points with zero capacity and functions of finite energy need not be continuous, as in Euclidean space of dimensions $ >2$. This would make it more challenging to define energy on MC as a limit of graph energies on $\Gamma_m$. The spectrum of $-\Delta_1$ is exactly $\{ 0, 9, \frac{29-\sqrt{73}}{2}, 15, 15, \frac{29 +\sqrt{73}}{2} \}$ and the associated non-constant eigenfunction are shown in Figures 5.1 -5.3.

\newpage

\begin{figure}[!ht]
\begin{center}
\includegraphics[height=2.5in,width=2.5in]{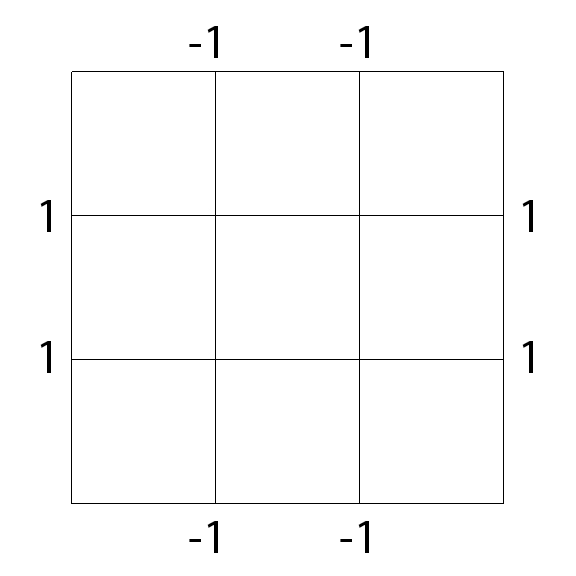}
\caption*{FIGURE 5.1. $\lambda=9$}
\end{center}
\end{figure}

\begin{figure}[!ht]
\begin{center}
\includegraphics[height=2.5in,width=2.5in]{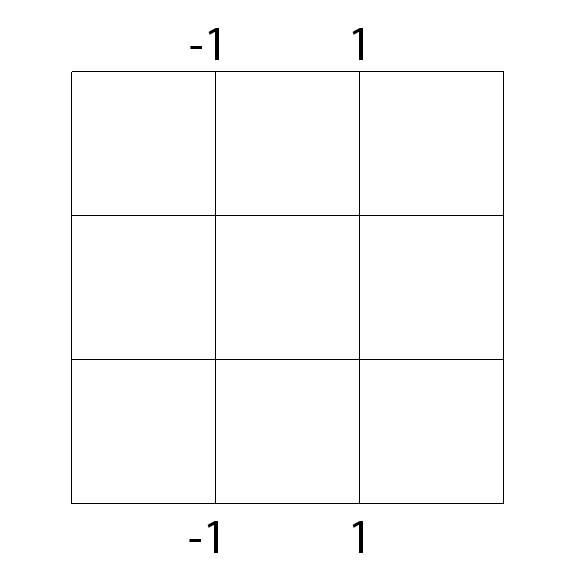}
\caption*{FIGURE 5.2. $\lambda=15$. Note that the eigenfunction may also be rotated.}
\end{center}
\end{figure}

\begin{figure}[!ht]
\begin{center}
\includegraphics[height=3in,width=3in]{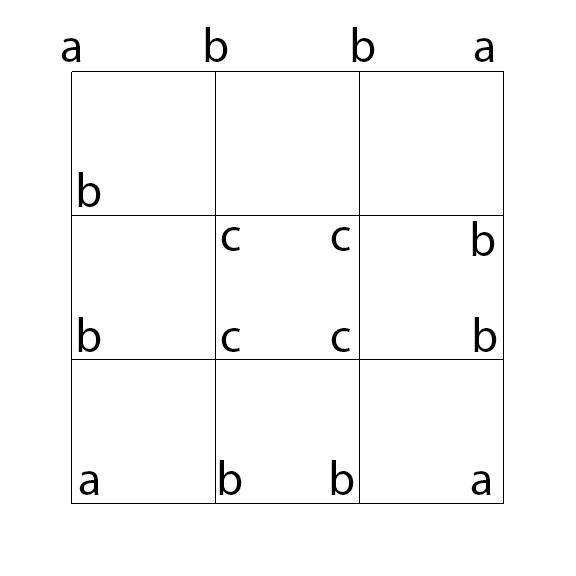}
\caption*{FIGURE 5.3. $\lambda=\frac{29 \pm \sqrt{73}}{2}$,$a=\frac{12}{\lambda-12}$,$b=-1$, $c=\frac{8}{\lambda-8}$}
\end{center}
\end{figure}

\begin{figure}[!ht]
\centering
\begin{minipage}[h!]{0.48\linewidth}
\begin{center}
\includegraphics[height=2.5in,width=2.5in]{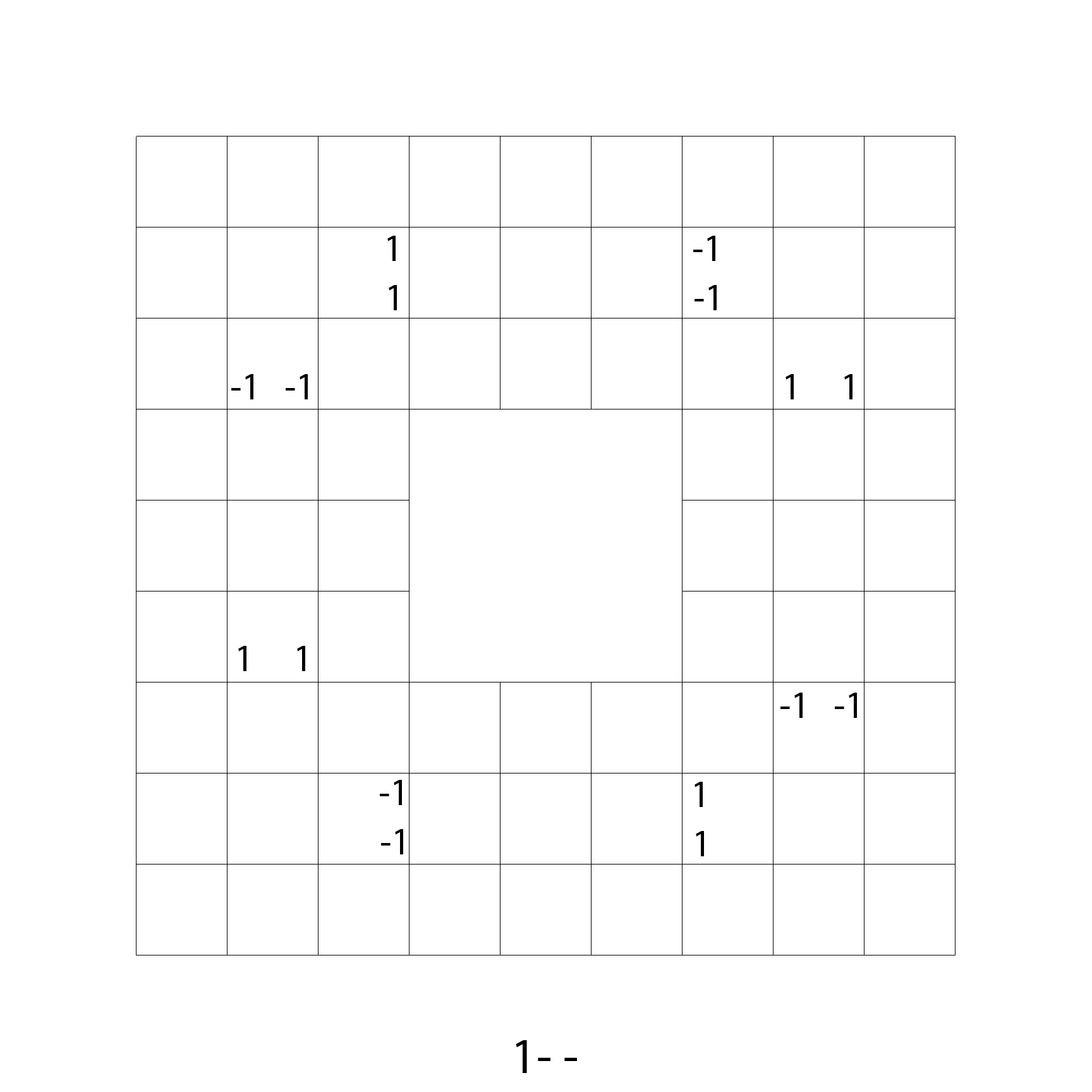}
\hspace{0.01cm}
\includegraphics[height=2.5in,width=2.5in]{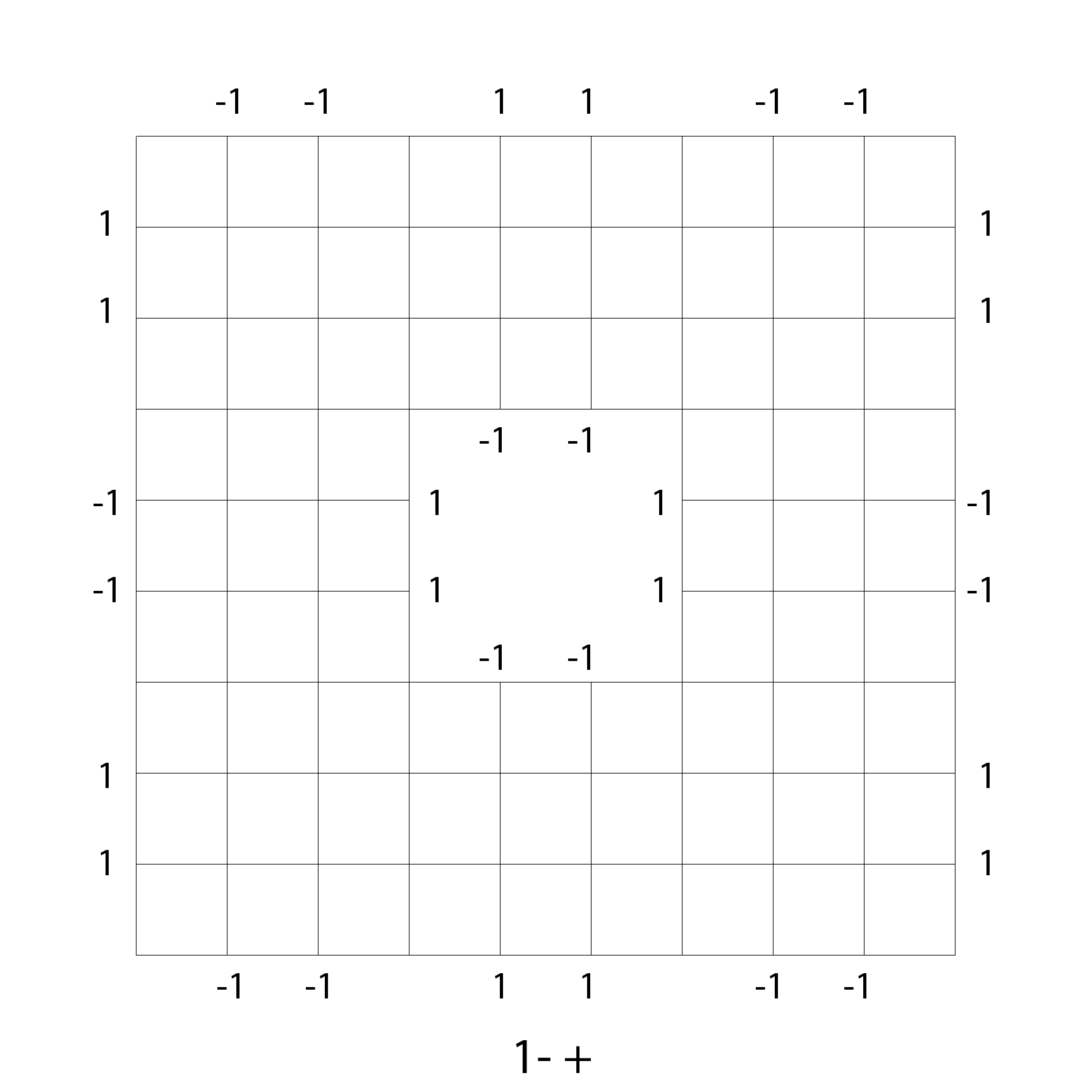}\\
\end{center}
\end{minipage}
\vspace{0.2cm}
\begin{minipage}[h!]{0.45\linewidth}
\begin{center}
\includegraphics[height=2.5in,width=2.5in]{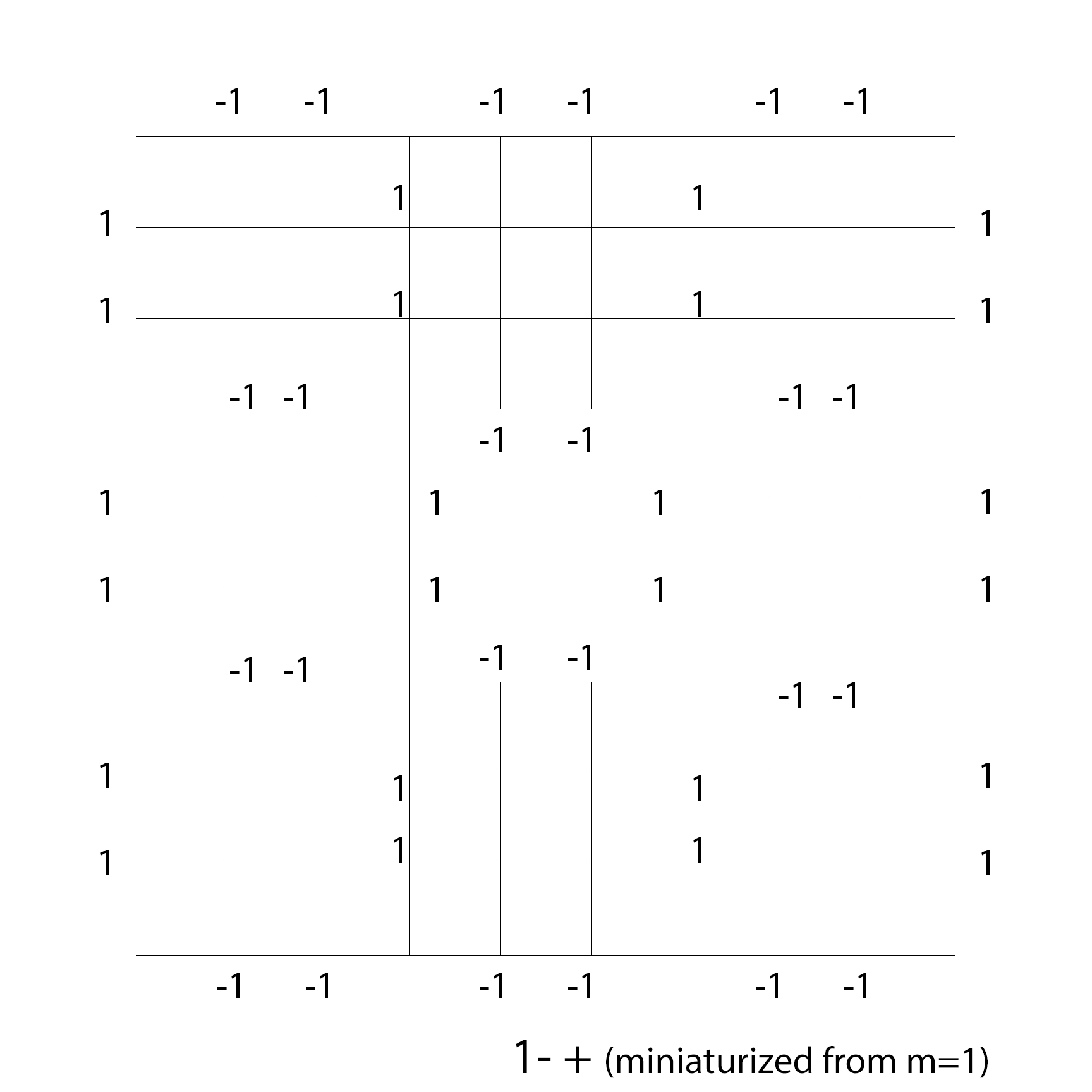}
\end{center}
\end{minipage}
\caption*{FIGRUE 5.4: Basis for $\lambda=9$ eigenspace for $m=2$ }
\label{fig:figure2}
\end{figure}

\begin{figure}[!ht]
\begin{center}
\includegraphics[height=3in,width=3in]{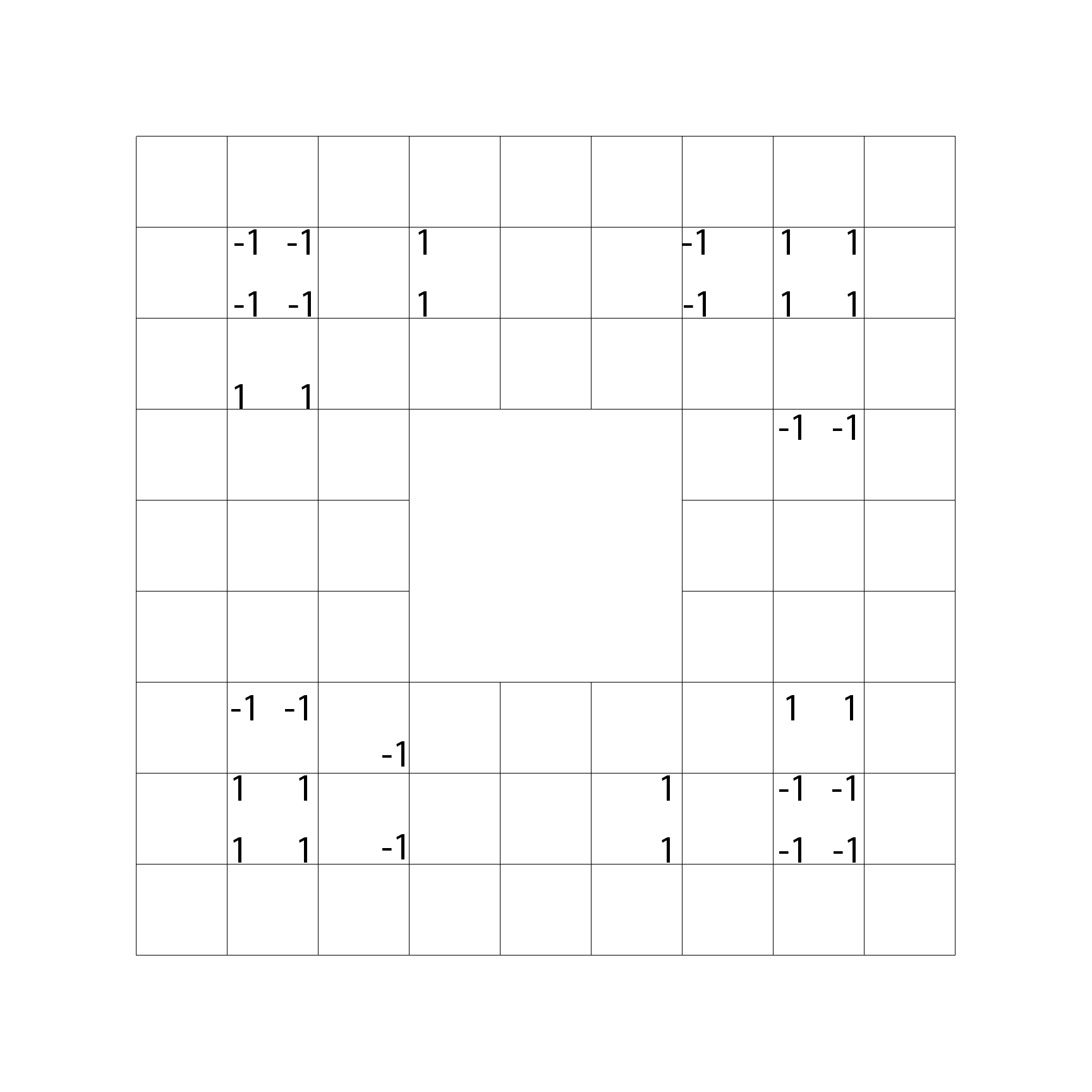}
\caption*{FIGURE 5.5:$\lambda=12$}
\end{center}
\end{figure}

\begin{figure}
\begin{center}
\includegraphics[height=3in,width=3in]{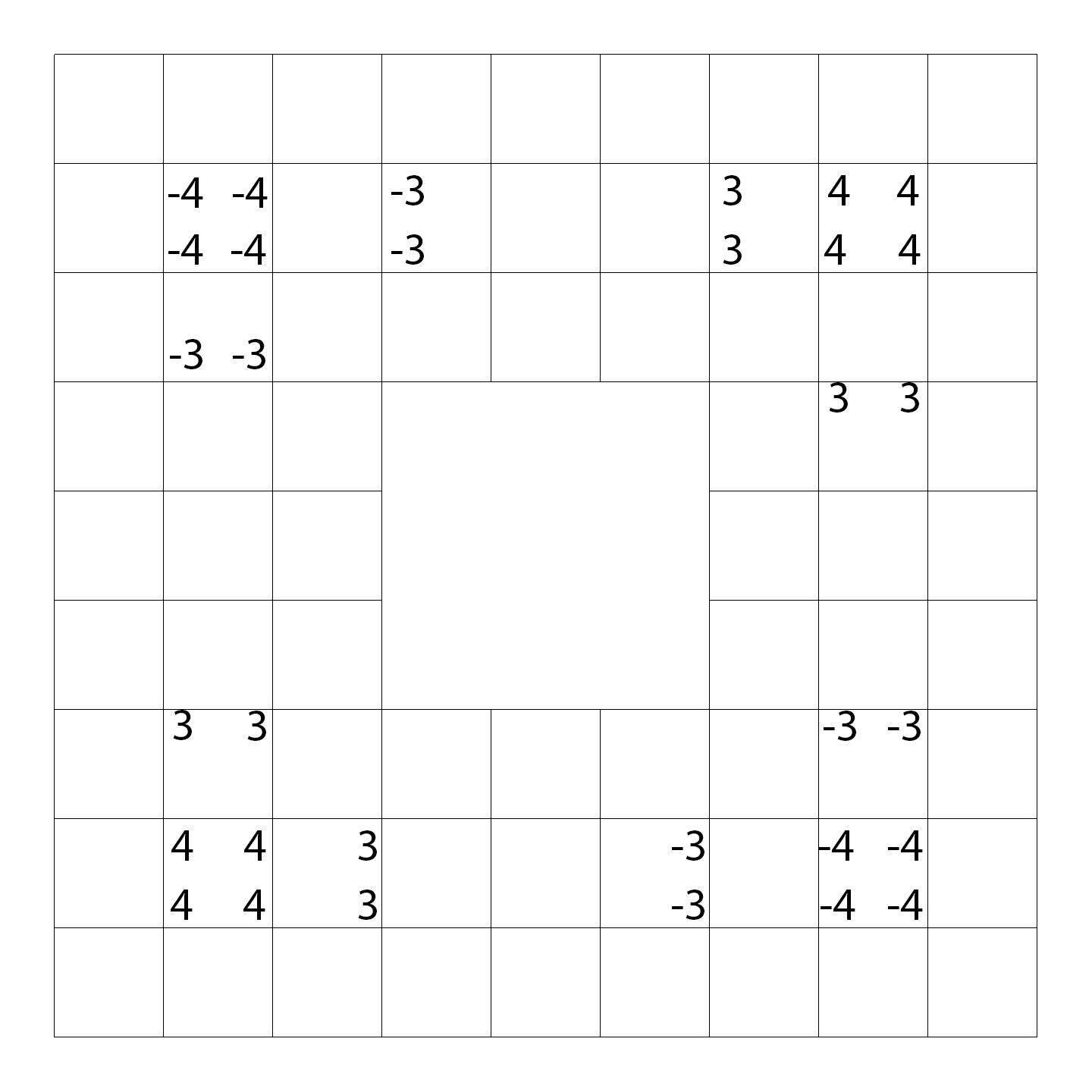}
\caption*{FIGURE 5.6:$\lambda=5$}
\end{center}
\end{figure}

With respect to the dihedral-4 symmetry group of MC, we have a single two-dimensional representations denoted 2 and four one-dimensional representation $ 1\pm \pm$, where the first $\pm$ denotes symmetry or skew-symmetry with respect to the diagonal reflections, and the second $\pm$ denotes symmetry or skew-symmetry with respect to horizontal and vertical reflections. With this notation, the eigenspace corresponding to $\lambda =15$ corresponds to representation 2, the eigenspace $\lambda =9$ corresponds to representation $1 - +$, and the eigenspaces $\lambda = \frac{29\pm\sqrt{75}}{2}$ correspond to the representation $1 + +$.

We have the same miniaturization in passing from $-\Delta_{m-1}$ to $-\Delta_m$ as in the case of SC as described in \cite{BHS}. The exceptions are $\lambda =9$, where the multiplicity is $\frac{2\cdot8^{m-1}+5}{7}$, and $\lambda=15$, where the multiplicity is $\frac{9\cdot8^{m-1}+5}{7}$. In Figure 5.4 we show a basis for the $\lambda=9$ eigenspace with $m=2$, consisting of one $1 - -$ representation and two $1 - +$ representations. Note that the third basis element is the one given by miniaturization. There are two other simple eigenvalues, $\lambda=5$ and $\lambda=12$ that appear with multiplicity one for $m \ge 2$. We show the eigenfunctions for $m=2$ in Figures 5.5 and 5.6, both $ 1 + -$ representations.





\begin{table}[h!]
\centering

\vspace{5mm}

\begin{tabular}{  c c c c }
$\text{Eigenfunction \# 2} $ &   $\text{Eigenfunction \# 3 \& 4}$ \\
\includegraphics[height=1.25in,width=1.75in]{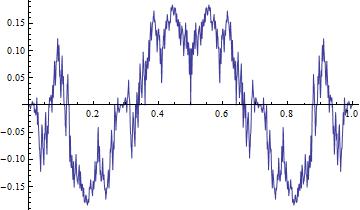} &
 \includegraphics[height=1.25in,width=1.75in]{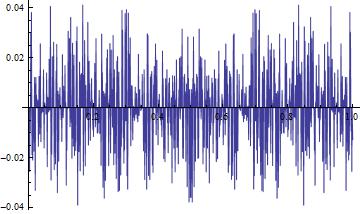} \includegraphics[height=1.25in,width=1.75in]{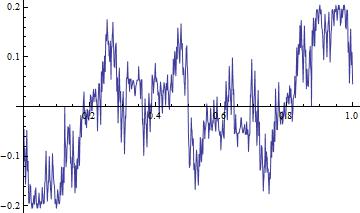}\\
\\
$ \text{Eigenfunction \# 5}  $ & $ \text{Eigenfunction \# 6} $ \\
\includegraphics[height=1.25in,width=1.75in]{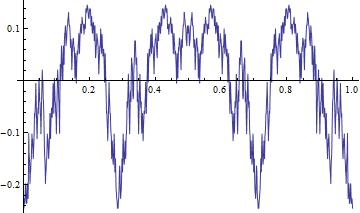}&
\includegraphics[height=1.25in,width=1.75in]{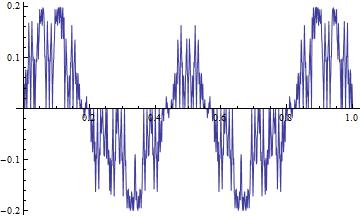}\\
\\ 

$\text{Eigenfunction \# 18}$ & $\text{Eigenfunction \#27 \& 28}$\\
\includegraphics[height=1.25in, width=1.75in]{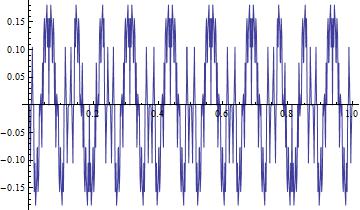}&  \includegraphics[height=1.25in,width=1.75in]{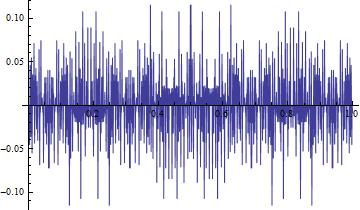} \includegraphics[height=1.25in, width=1.75in]{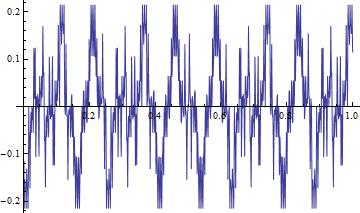}\\
\\
$\text{Eigenfunction \# 29}$ & $\text{Eigenfunction \# 54} $ \\
 \includegraphics[height=1.25in,width=1.75in]{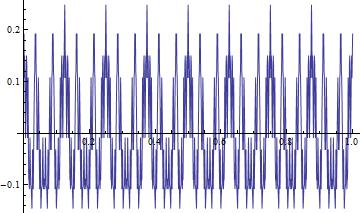} & \includegraphics[height=1.25in, width=1.75in]{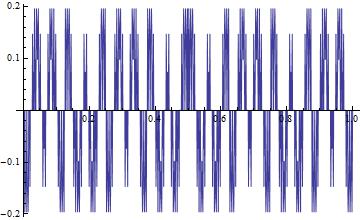}\\
\\

\end{tabular}
\caption*{FIGURE 5.7. Eigenfunctions of the Magic Carpet at Level 3}
\end{table}

\begin{table}
\centering
\begin{adjustwidth}{-1.0in}{-1.0in}
\vspace{3mm}
\begin{tabular}{  c c c c }
$\text{Eigenvalue Counting Function}$\\
\\
$\text{ Level 2}$ & $\text{ Level 3}$ & $\text{Level 4}$ \\
\\
 \includegraphics[height=1.25in,width=1.75in]{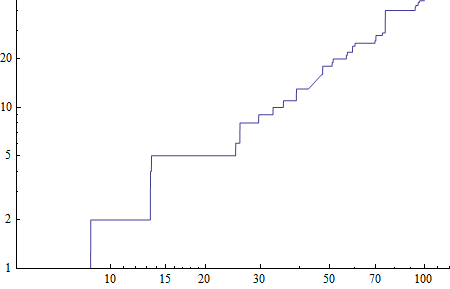}&  \includegraphics[height=1.25in,width=1.75in]{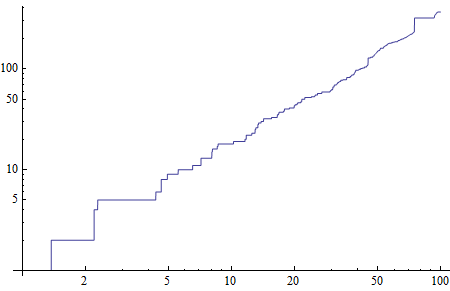} &  \includegraphics[height=1.25in,width=1.75in]{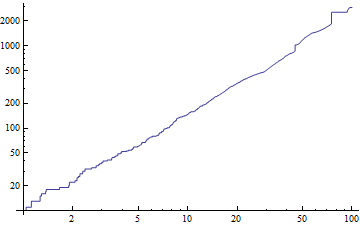}\\ 
\\
$\text{Weyl Ratios}$ \\
\\
$ \text{Level 2}$ & $ \text{Level 3}$ & $\text{Level 4} $\\
\\
 \includegraphics[height=1.25in,width=1.75in]{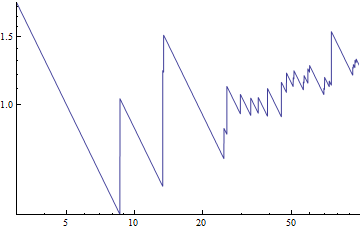} &  \includegraphics[height=1.25in,width=1.75in]{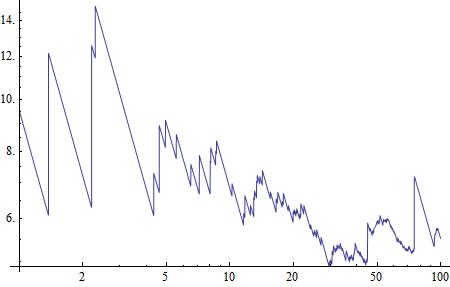} &  \includegraphics[height=1.25in,width=1.75in]{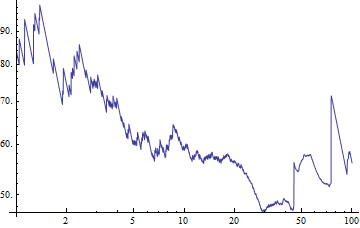}\\
\end{tabular}
\vspace{1mm}

\caption*{FIGURE 5.8. Eigenvalue Counting Function and Weyl Ratio of the Magic Carpet}
\caption*{$\beta=1.2$}
\end{adjustwidth}
\end{table}

\begin{table}[!ht]
\begin{adjustwidth}{-1.3in}{-1.3in}
\begin{center}
$\begin{array}{|c|c|c|c|c|c|c|c|c|c|c|c|||c|c|c|}
\hline
\multicolumn{3}{|c|}{\text {Level 1}}
& \multicolumn{3}{|c|}{\text{Level 2}}
& \multicolumn{3}{|c|}{\text{Level 3}}
& \multicolumn{3}{|c|}{\text{Level 4}}
&\multicolumn{2}{|c|}{\text{Ratio}} \\
\hline
\#  & \text{Mult} & \text{Eiv} &  \# & \text{Mult} & \text{Eiv}& \# & \text{Mult} & \text{Eiv}& \# & \text{Mult} & \text{Eiv}& \frac{\lambda_2}{\lambda_3}&\frac{\lambda_3}{\lambda_4}\\
\hline 1 & 1 & 0 & 1 & 1 & 0 & 1&1&0&1&1&0&&\\  \hline
\hline 2 &1&9.000& 2 & 1&1.726& 2&1& 0.274&2&1&0.0429&6.281&6.406\\ \hline
\hline 3&1&10.228&3&2&2.674&3&2&0.441&3&2&0.068&6.069&6.442\\ \hline
\hline 4&2& 15.000&5&1&2.697&5&1&0.458&5&1&0.072&5.885&6.365\\ \hline
\hline 6&1&18.772&6&1&5.000&6&1&0.869&6&1&0.138&5.752&6.294\\ \hline
\hline &&&7&2&5.515&7&2&0.923&7&2&0.146&5.586&6.296\\ \hline
\hline &&&9&1&5.917&9&1&0.987&9&1&0.154&5.993&6.402\\ \hline
\hline &&&10&1&6.580&10&1&1.112&10&1&0.173&5.915&6.423\\ \hline
\hline &&&11&1&7.102&11&1&1.304&11&1&0.207&5.444&6.290\\ \hline
\hline &&&12&2&7.808&12&2&1.431&12&2&0.223&5.455&6.412\\ \hline
\hline &&&14&3&9.000&14&2&1.610&14&2&0.232&&6.386\\ \hline
\hline &&&17&2&9.475&16&1&1.620&16&1&0.257&&6.305\\ \hline
\hline &&&19&1&10.147&17&1&1.709&17&1&0.272&&6.277\\ \hline
\hline &&&20&1&10.228&18&1&1.726&18&1&0.274&&6.281\\ \hline
\hline &&&21&1&11.261&19&1&2.044&19&1&0.331&&6.168\\ \hline
\hline &&&22&1&11.347&20&1& 2.321&20&1&0.375&&6.177\\ \hline
\hline &&&23&2&11.796&21&2&2.354&21&2&0.379&&6.193\\ \hline
\hline &&&25&1&12.000&23&1&2.501&23&1&0.411&&6.084\\ \hline
\hline &&&26&1&13.893&24&2&2.594&24&2&0.423&&6.131\\ \hline
\hline &&&27& 2&13.998&26&1&2.613&26&1&0.430&&6.068\\ \hline
\hline &&&29&1&14.675&27&2&2.674&27&2&0.440&&6.070\\ \hline
\hline &&&30&11&15.000&29&1&2.697&29&1&0.457&&5.896\\ \hline
\hline &&&41&1&18.663&30&1&2.773&30&1&0.458&&6.051\\ \hline
\hline &&&42&1&18.6701&31&2&2.846&31&2&0.472&&6.021\\ \hline
\hline &&&43&1&18.772&33&1&3.108&33&1&0.518&&5.993\\ \hline
\hline &&&44&2&19.087&34&2&3.314&34&2&0.553&&5.990\\ \hline
\hline &&&46&1&19.316&36&1&3.361&36&1&0.571&&5.882\\ \hline
\end{array}$
\vspace{5mm}
\caption{Eigenvalues of the Magic Carpet}
\end{center}
\end{adjustwidth}
\end{table}

\begin{table}[!ht]
\begin{adjustwidth}{-.8in}{-.5in}
\begin{center}
$\begin{array}{|c|c|c|c|c|c|c|c|c|c|c|c|} 
\hline
\multicolumn{3}{|c|}{\text {Level 1}}
& \multicolumn{3}{|c|}{\text{Level 2}}
& \multicolumn{3}{|c|}{\text{Level 3}}
& \multicolumn{3}{|c|}{\text{Level 4}}\\
\hline
\#  & \text{Mult} & \text{Eiv} &  \# & \text{Mult} & \text{Eiv}& \# & \text{Mult} & \text{Eiv}& \# & \text{Mult} & \text{Eiv}\\
\hline 1 & 1 & 0 & 1 & 1 & 0 & 1&1&0&1&1&0\\  \hline
\hline 2 &1&57.600&2 & 1& 70.701 &2&1&72.041&2&1&71.974\\ \hline
\hline 3&1&65.459&3&2&109.553&3&2&115.511&3&2&114.756\\ \hline
\hline 4&2&96.000&5&1&110.497&5&1&120.146&5&1&120.795\\ \hline
\hline 6&1&120.141&6&1&204.800&6&1&227.868&6&1&231.693\\ \hline
\hline &&&7&2&211.239&7&2&241.978&7&2&245.953\\ \hline
\hline &&&9&1&242.373&9&1&258.799&9&1&258.704\\ \hline
\hline &&&10&1&269.530&10&1&291.629&10&1&290.581\\ \hline
\hline &&&11&1&290.936&11&1&342.0&11&1&347.959\\ \hline
\hline &&&12&2&319.840&12&2&375.214&12&2&374.467\\ \hline
\hline &&&14&3&368.640&14&2&422.201&14&2&423.121\\ \hline
\hline &&&17&2&388.136&16&1&424.783&16&1&431.174\\ \hline
\hline &&&19&1&415.633&17&1&448.0801&17&1&456.843\\ \hline
\hline &&&20&1&418.938&18&1&452.486&18&1&461.037\\ \hline
\hline &&&21&1&461.277&19&1&535.914&19&1&555.996\\ \hline
\hline &&&22&1&464.809&20&1&608.567&20&1&630.487\\ \hline
\hline &&&23&2&483.192&21&2&616.822&21&2&637.366\\ \hline
\hline &&&25&1&491.521&23&1&655.682&23&1&689.711\\ \hline
\hline &&&26&1&569.094&24&2&680.226&24&2&710.011\\ \hline
\hline &&&27&2&573.394&26&1&685.060&26&1&722.426\\ \hline
\hline &&&29&1&601.112&27&2&701.143&27&2&739.204\\ \hline
\hline &&&30&11&614.400&29&1&707.183&29&1&767.555\\ \hline
\hline &&&41&1&764.436&30&1&726.975&30&1&768.899\\ \hline
\hline &&&42&1&764.727&31&2&746.101&32&2&793.059\\ \hline
\hline &&&43&1&768.901&33&1&814.798&33&1&870.066\\ \hline
\hline &&&44&2&781.840&34&2&868.857&34&2&928.283\\ \hline
\hline &&&46&1&791.191&36&1&881.136&36&1&958.650\\ \hline
\end{array}$
\vspace{5mm}
\caption{Renormalized Eigenvalues of the Magic Carpet}
\end{center}
\end{adjustwidth}
\end{table}

In Table 5.1 we show the eigenvalues for levels $m=1,2,3,4$ and their ratios. The ratio values suggest that the eigenvalue renormalization factor should be around 6.4. In Table 5.2 we show the renormalized eigenvalues (multiplied by $(6.4)^m$). Because 6.4 is smaller than the measure renormalization factor 8, this suggests that the energy renormalization factor would have to be around 1.25. Since this is greater than one, it would imply that points have zero capacity and functions of finite energy do not have to be continuous, in contrast to all PCF fractals, SC and PG. We also observe that the spectral data agrees exactly with the data in \cite{kform} for the approximations to the zero-forms Laplacian on MC. In fact the approximate graph Laplacians are identical. 

As in the case of OG, it appears that the only multiplicities in the spectrum of $ -\Delta $ will be one and two, as the higher multiplicities in the spectrum of $-\Delta_m$ occur high up in the spectrum and will not survive in the limit. The only noticeable spectral gap in $-\Delta_m$ occur near $\lambda=15$ and again will not survive in the limit. There may be smaller spectral gaps that survive in the limit, especially since the average separation of eigenvalues goes to zero. This remains to be investigated.
In Figures 5.7 and 5.8 we show the graph of some eigenfunction on the parameter circle. Note that the Peano curve does not respect the symmetries of the MC, so these graphs do not show the kind of symmetry found in the cases of PG or OG. In Figure 5.9 we show graphs of the eigenvalue counting function and the Weyl ratio at levels 2,3, and 4. 

\newpage
\newpage

\section{The Torus and the Triangle}
The Peano curves to the torus, $T_0$, and the triangle, $T_r$, yield graph approximations that are identical to the standard lattice graph approximations. For $T_o$, the graph $\Gamma_m$ has vertices that we may identify with the points $\left(\frac{j}{3^m},\frac{k}{3^m}\right) \mod 1$ with $0\leq j,\ k\leq3^m$. The neighbors are the four points $\left(\frac{j\pm 1}{3^m},\frac{k\pm 1}{3^m}\right)$. Thus the Laplacian $-\Delta_m$ is 
\begin{equation}
-\Delta_m u\left(\frac{j}{3^m},\frac{k}{3^m}\right)=\sum\left(u\left(\frac{j}{3^m},\frac{k}{3^m}\right)-u\left(\frac{j\pm 1}{3^m},\frac{k\pm 1}{3^m}\right)\right).
\end{equation}
Of course the formula looks different in terms of the parameterization $\gamma(t)$ for $t=\frac{n}{2\cdot9^m}$ with identifications. The eigenvalues will be exactly the same, while the eigenfunctions, $e^{2\pi i(px+qy)}$ for $(p,q)\in\mathbb{Z}^2$ (with eigenvalue $4\pi^2(p^2+q^2)$), will have a different appearance as a function of $t$. Aside from the zero-eigenspace, all eigenvalues have multiplicities equal to a multiple of 4 (if $p\neq q$, then multiplicity is at least eight, including $(\pm p,\pm q)$ and $(\pm q, \pm p)$). Because the Peano curve does not respect the dihedral-4 symmetries of the torus, it is difficult to separate out specific eigenfunctions within each eigenspace, so we have been unable to ``interpret" the graphs of eigenfunctions for this example. It is obvious from $(6.1)$ that $9^m\Delta_m\to -\Delta$. In Table 6.1 we show the eigenvalues of $\Delta_m$ and their ratios. In Table 6.2 we show the eigenvalues normalized by multiplication by $\frac{9^m}{4\pi^2}$. The deviation from the expected integer values $p^2+q^2$ is small enough at the low end of the spectrum to confirm the convergence, but it rapidly grows out of hand as the eigenvalues increase. This just confirms that this finite difference method has rather poor accuracy.

\begin{figure}
\begin{center}
\includegraphics[scale=.50,trim = 0mm 0mm 0mm 0mm, clip]{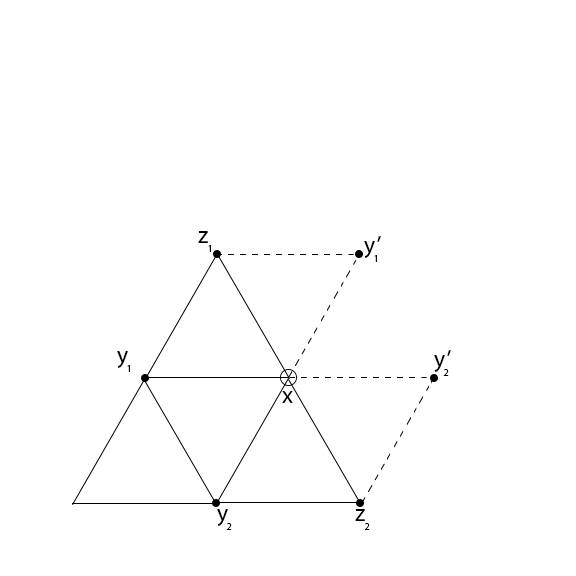}
\caption*{FIGURE 6.1: $x$ is a boundary point, with two boundary neighbors $z_1$,$z_2$ and two interior neighbors $y_1$,$y_2$ in the triangle. We add on two virtual triangles (indicated by dotted lines) with new vertices $y_1'$ and $y_2'$, and even reflection makes $u(y_1')=u(y_1)$ and $u(y_2')=u(y_2)$}
\end{center}
\end{figure}

\begin{table}
\begin{center}
$\begin{array}{|c|c|c|c|c|c|}
\hline
\multicolumn{3}{|c|}{\text{Level 1}}
& \multicolumn{3}{|c|}{\text{Level 2}}\\
\hline
\# & \text{Mult} & \text{Eiv} & \# & \text{Mult} & \text{Eiv}\\ \hline
1 & 1 & 0 & 1 & 1 & 0\\ \hline
2 & 4 & 3 & 2 & 4 & 0.4679\\ \hline
6 & 4 & 6 & 6 & 4 & 0.9358\\ \hline
&&& 10 & 4 & 1.6527\\ \hline
&&&14 & 8 & 2.1206\\ \hline
&&&22 & 7 & 3\\ \hline
&&&29 &  4 & 3.3051\\ \hline
&&&33 &  8 & 3.4679\\ \hline
&&&41 & 4 & 3.8793\\ \hline
&&&45 & 8 & 4.3473\\ \hline
&&&53 & 8 &4.6527\\ \hline
\end{array}$
\vspace{1mm}

\caption*{TABLE 6.1: Eigenvalues of the Torus}
\end{center}
\end{table}

\begin{table}
\begin{center}
$\begin{array}{|c|c|c|c|c|c|}
\hline
\multicolumn{3}{|c|}{\text{Level 1}}
& \multicolumn{3}{|c|}{\text{Level 2}}\\
\hline
\# & \text{Mult} & \text{Eiv} & \# & \text{Mult} & \text{Eiv}\\ \hline
1 & 1 & 0 & 1 & 1 & 0\\ \hline
2 & 4 & 1 & 2 & 4 & 1\\ \hline
6 & 4 & 2 & 6 & 4 & 2\\ \hline
&&& 10 & 4 & 3.5320\\ \hline
&&&14 & 8 & 4.5320\\ \hline
&&&22 & 7 & 6.4114\\ \hline
&&&29 &  4 & 7.0641\\ \hline
&&&33 &  8 & 7.4114\\ \hline
&&&41 & 4 & 8.2908\\ \hline
&&&45 & 8 & 9.2908\\ \hline
&&&53 & 8 &9.9435\\ \hline
\end{array}$
\vspace{1mm}

\caption*{TABLE 6.2: Renormalized eigenvalues of the Torus}
\end{center}
\end{table}

The situation for the triangle is much better. The Peano curve again produces graph approximations which are identical to the triangular lattice graphs. At level $m$ we subdivide each side of the triangle into $2^m$ equal segments, and the intersection points of the lines joining the vertices on the sides are the vertices of the graph. Interior points have four neighbors (the two interior neighbors carry twice the conductance of the boundary neighbors), and the three corners of the triangle have two neighbors. The approximate Laplacian may be written

\begin{equation}
-\Delta_m u(x)=6 u(x)-
\begin{cases} 
\sum_{y\underset{m} \sim x} u(y) &  $\text{$x$ an interior point}$ \\
2\sum_{y\underset{m}\sim x} u(y)+\sum_{z\underset{m} \sim x} u(z) &\text{$x$ a boundary point}\\ & \text{$z$ its boundary neighbors}\\
\\
3\sum_{z\underset{m}\sim x} u(z) & \text{$x$ a corner point.}
\end{cases}
\end{equation}

\begin{table}
\begin{adjustwidth}{-1.3in}{-1in}
\begin{center}
$\begin{array}{|c|c|c|c|c|c|c|c|c|c|c|c|c||c|c|c|} 
\hline
\multicolumn{3}{|c|}{\text {Level 1}}
& \multicolumn{3}{|c|}{\text{Level 2}}
& \multicolumn{3}{|c|}{\text{Level 3}}
& \multicolumn{4}{|c|}{\text{Level 4}}
& \multicolumn{3}{|c|}{\text{Ratios}}\\
\hline
\#  & \text{Mult} & \text{Eiv} &  \# & \text{Mult} & \text{Eiv}& \# & \text{Mult} & \text{Eiv}& \# & \text{Mult} & \text{Eiv}& \text{Norm. Eiv} & \frac{\lambda_1}{\lambda_2}&\frac{\lambda_2}{\lambda_3} &\frac{\lambda_3}{\lambda_4} \\
\hline 1 & 1 & 0 & 1 & 1 & 0 & 1&1&0&1&1&0&&&\\  \hline
\hline 2 &2&0.5120 	&2 & 2& 0.1347 &2&2&0.0341 &2&2&0.0086&1&3.8&3.95&3.97 \\ \hline
\hline 4&1&1.3333&4&1&0.3905&4&1&0.1015 &4&1&0.0256&2.976&3.41&3.85&3.96 \\ \hline
\hline 5&2&1.6667 &5&2&0.5120 &5&2&0.1347&5&2&0.0341&3.965&3.26&3.8&3.95\\ \hline
\hline 7&2&2.3333 &7&2&0.8502 &7&2&0.2328 &7&2&0.0595&6.918&2.74&3.65&3.91 \\ \hline
\hline 9&3&2.6667 &9&2&1.0572 &9&2&0.2968 &9&2&0.0764&8.883&2.52&3.56&3.88 \\ \hline
\hline 12&2&2.8214 &11&1&1.3333 &11&1&0.3905 &11&1&0.1015&11.802&2.12&3.41&3.85\\ \hline
\hline 14&2&3.0000 &12&2&1.4227 &12&2&0.4213&12&2&0.1098&12.767&2.11&3.38&3.84 \\ \hline
\hline &&&14&2&1.6667 &14&2&0.5120 &14&2&0.1348&15.674&&3.26&3.8 \\ \hline
\hline &&&16&2&1.8619 &16&2&0.5999 &16&2&0.1595&18.456&&3.1&3.76 \\ \hline
\hline &&&18&2&2.0000 &18&2&0.6576&18&2&0.1759&20.453&&3.04&3.74\\ \hline
\hline &&&20&2&2.2323 &20&2&0.7697 &20&2&0.2085 &24.244&&2.9&3.69\\ \hline
\hline &&&22&1& 2.2761 &22&1&0.8231 &22&1&0.2247 &26.127&&2.77&3.66	\\ \hline
\hline &&&23&2&2.3333 &23&2&0.8502 &23&2&0.2328&27.069&&2.74&3.65 \\ \hline
\hline &&&25&2&2.4832&25&2&0.9298 &25&2&0.2569&29.872&&2.67&3.62 \\ \hline

\end{array}$
\vspace{5mm}
\caption*{TABLE 6.3: Eigenvalues of the Triangle}
\end{center}
\end{adjustwidth}
\end{table}

Note that this is an approximation to the Neumann Laplacian on $T_r$, since even reflection across a boundary line to a virtual neighboring triangle transforms 
\[2\sum_{y\underset{m}\sim x} u(y)+\sum_{z\underset{m} \sim x} u(z)\]
into the sum of $u$ at the six neighboring vertices in the larger configuration(Figure 6.1).
Then we have 
\begin{equation}-4^m\Delta_m\to \frac{3}{2}\Delta \text{(Neumann boundary conditions) }\end{equation}
(the factor $\frac{3}{2}$ comes from $\left(\frac{\partial}{\partial  x}\right)^2+\left(\frac{1}{2}\frac{\partial}{\partial x}+\frac{\sqrt{3}}{2}\frac{\partial}{\partial y}\right)^2+\left(\frac{1}{2}\frac{\partial}{\partial x}-\frac{\sqrt{3}}{2}\frac{\partial}{\partial y}\right)^2=\frac{3}{2}\Delta$). In Table 6.3 we show the eigenvalues at different levels and their ratios, and on level 4 the eigenvalues normalized by multiplication by $4^m\frac{2}{3}\left(\frac{3}{4\pi}\right)^2$, to be compared with the integer values $p^2+q^2+pq$.
However, now we are able to make sense of the graphs of the eigenfunctions as a function of the circle parameter $t$. In fact we will argue that this alternate way of visualizing eigenfunctions offers some appealing advantages to the rather awkward view of functions defined on $T_r$. The dihedral-3 symmetry group acting on $T_r$ is not completely respected by the Peano curve, but the subgroup of rotations $\left(\text{through angles }0,\ \frac{2\pi}{3}, \ \frac{4\pi}{3}\right)$ is. A rotation through the angle $\frac{2\pi}{3}$ amounts to the translation $t\to t+\frac{1}{3}$, so any function invariant under the rotation subgroup is represented by a function periodic of period $\frac{1}{3}$, a property instantly visible from the graph.

There is also a different symmetry, not part of the dihedral-3 symmetry group, that plays an important role in miniaturization. Take any Neumann eigenfunction $u$ on $T_r$ with eigenvalue $\lambda_j$, shrink it to the subtriangle $\frac{1}{2}\ T_r$ by dilation, and reflect it in each of the interior sides to the remaining subtriangles. Because of the Neumann boundary conditions this miniaturization produces an eigenfunction $\tilde u$ with eigenvalue $4\lambda$. In the circle parameterization $\tilde u (t)=u(4t)$, so we obtain a function that is periodic of period $\frac{1}{4}$. Of course if the initial $u$ was rotation invariant, then the period of $\tilde u$ is $\frac{1}{12}$. By iterating miniaturization we may obtain functions that have period $\frac{1}{4^n}$ or $\frac{1}{3\cdot 4^n}$. All these periods are immediately apparent from the graphs, shown in Figure 6.2.

But we can say much more precisely where these periods occur. The Neumann spectrum of $T_r$ is well-known.  Suppose the triangle has side length 1 and corners at $(0,0),\ \left(\frac{\sqrt{3}}{2},\frac{1}{2}\right)$ and $\left(\frac{\sqrt{3}}{2},\frac{-1}{2}\right)$. For every pair $(p,q)$ of non-negative integers define
\begin{equation}
u(p,q)=\quad\quad\quad\quad\quad\quad\quad\quad\quad\quad\quad\quad
\quad\quad\quad\quad\quad\quad\quad\quad\quad\quad\quad\quad\quad\quad
\quad\quad\quad
\end{equation}
\quad$e(p,q)+e(-p,-q)+e(-p,p+q)+e(q,-p-q)+e(-p-q,p)+e(p+q,-q),$\\
\\
where
\begin{equation}
u(p,q)(x)=e^{2\pi i(pv+qw)\cdot x}\text{ for }v=\left(\frac{1}{\sqrt{3}},\frac{1}{3}\right),\ w=\left(0,\frac{2}{3}\right).
\end{equation}
Then $u(p,q)$ is a Neumann eigenfunction with eigenvalue $\left(\frac{4\pi}{3}\right)^2(p^2+q^2+pq)$, and these are the only ones. Note that when $p=q$ we obtain an eigenspace of multiplicity one, and for $p\neq q$ the functions $u(p,q)$ and $u(q,p)$ span an eigenspace of multiplicity two. Coincidences where $p^2+q^2+pq=(p')^2+(q')^2+p'q'$ may lead to higher multiplicities, but this does not change the narrative substantially.

\begin{table}
\begin{adjustwidth}{-1.3in}{-1in}
\begin{center}

\vspace{5mm}

\begin{tabular}{  c c c c }
$\text{Eigenfunction \# 2 \& 3} $ &   $\text{Eigenfunction \# 4}$ \\
\includegraphics[height=1.25in,width=1.75in]{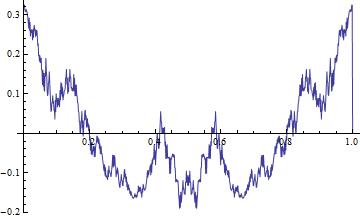} \includegraphics[height=1.25in,width=1.75in]{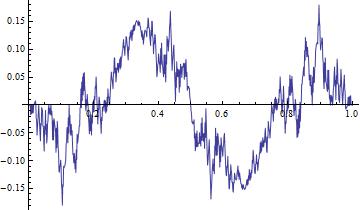} &
 \includegraphics[height=1.25in,width=1.75in]{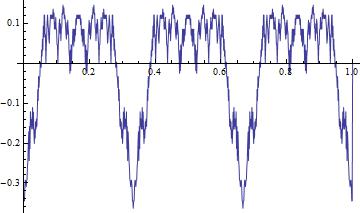} \\
\\
$ \text{Eigenfunction \# 5 \& 6}  $ & $ \text{Eigenfunction \# 7 \& 8} $ \\
\includegraphics[height=1.25in,width=1.75in]{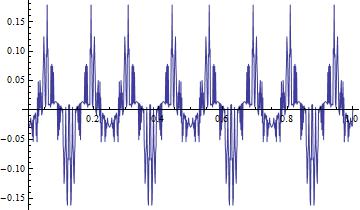} \includegraphics[height=1.25in,width=1.75in]{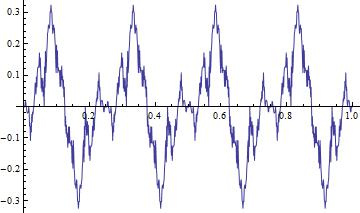}&
\includegraphics[height=1.25in,width=1.75in]{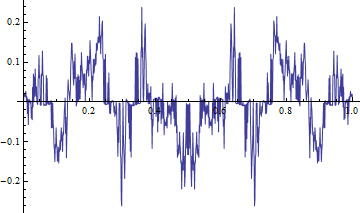} \includegraphics[height=1.25in, width=1.75in]{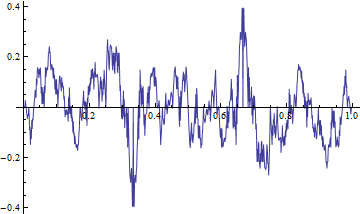}\\
\\ 

$\text{Eigenfunction \# 9 \& 10}$ & $\text{Eigenfunction \#11}$\\
\includegraphics[height=1.25in, width=1.75in]{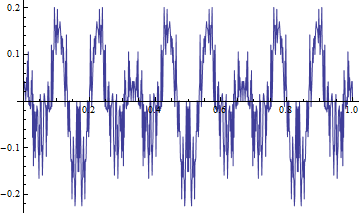} \includegraphics[height=1.25in, width=1.75in]{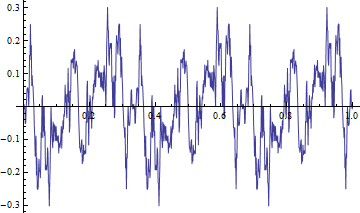}&  \includegraphics[height=1.25in,width=1.75in]{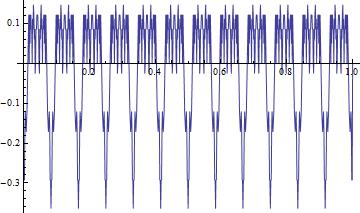} 
\\
$\text{Eigenfunction \# 14 \& 15}$ \\
 \includegraphics[height=1.25in,width=1.75in]{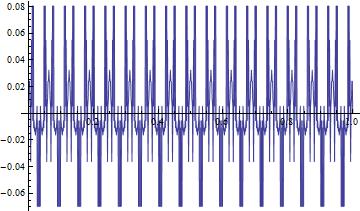}  \includegraphics[height=1.25in, width=1.75in]{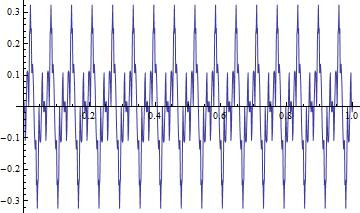}\\
\\

\end{tabular}
\end{center}
\caption*{FIGURE 6.2: Eigenfunctions of the Triangle at Level 4}
\end{adjustwidth}

\end{table}

The $p=q$ multiplicity one space transforms according to the $1+$ representation (symmetric with respect to reflections) and the formula simplifies to $u(p,p)=2\cos 2\pi p\frac{\sqrt{3}}{3}x\cos 2\pi p y+\cos 2\pi p\frac{2\sqrt{3}}{3}x$, and so $u(2^n(2^l+1),2^n(2^l+1))$ also transforms according to the $1+$ representation and has period $\frac{1}{3\cdot 4^n}$. This is seen in $\# 4$ corresponding to$(1,1)$ and $\# 11$ $(2,2)$. When $p\neq q$ and both are odd then the multiplicity two space transforms according to the 2 representation and has no periodicity. When $p$ is even and $q$ is odd, then there are two cases: unless $q=p\pm 3$, it is again the 2 representation, seen in $\#7$ $\&$ $8$ $(2,1)$,but if $q=p\pm 3$ then the space breaks up into a direct sum of a $1+$ and a $1-$ representation, and both have period $\frac{1}{3}$, seen in $\# 9 \& 10$ $(0,3)$. In fact, taking the sum and difference of (6.4) for $(p+3,p)$ and $(p,p+3)$ yields the formulas 
\begin{equation}
\cos 2\pi \frac{p+3}{\sqrt{3}} x\cos 2\pi(p+1)y+\cos 2\pi \frac{p}{\sqrt{3}} x\cos 2\pi(p+2)y+\cos 2\pi \frac{2p+3}{\sqrt{3}} x\cos 2\pi y
\end{equation}
for the $1+$ function and 
\begin{equation}
\cos 2\pi \frac{p+3}{\sqrt{3}} x\sin 2\pi(p+1)y+\cos 2\pi \frac{p}{\sqrt{3}} x\sin 2\pi(p+2)y+\cos 2\pi \frac{2p+3}{\sqrt{3}} x\sin 2\pi y
\end{equation}
for the $1-$ function. Finally, if $p$ and $q$ are both even with $(p,q)=2^n(p',q')$ with at least one of $p',q'$ odd, then the space is the $n$-fold iterated miniaturization of the $(p',q')$ space, with the same representations and the period multiplied by $\frac{1}{4^n}$. These behaviors are seen in $\#$ $5$ $\&$ $6$ $(2,0)$ and $\#$ $14$ $\&$ $15$ $(4,0)$.

\bigskip

\emph{*Any opinions, findings, and conclusions or recommendations expressed in this material are those of the authors and do not necessarily reflect the views of the National Science Foundation}

\end{document}